\begin{document}
\newtheorem{theorem}{\indent Theorem}[section]
\newtheorem{proposition}[theorem]{\indent Proposition}
\newtheorem{definition}[theorem]{\indent Definition}
\newtheorem{lemma}[theorem]{\indent Lemma}
\newtheorem{remark}[theorem]{\indent Remark}
\newtheorem{corollary}[theorem]{\indent Corollary}

\begin{center}
    {\large \bf  On the blow-up solutions for the nonlinear Schr\"{o}dinger equation with combined power-type nonlinearities}
\vspace{0.5cm}\\{ Binhua Feng}\\
{\small Department of Mathematics, Northwest Normal University, Lanzhou, 730070, P.R. China }\\
\end{center}

\renewcommand{\theequation}{\arabic{section}.\arabic{equation}}
\numberwithin{equation}{section}
\footnote[0]{\hspace*{-7.4mm}
E-mail: binhuaf@163.com(Binhua Feng)\\
This work is supported by NSFC Grants (No. 11601435, No. 11401478), Gansu Provincial Natural
Science Foundation (1606RJZA010) and NWNU-LKQN-14-6.}

\renewcommand{\baselinestretch}{1.7}
\large\normalsize
\begin{abstract}
This paper is devoted to the analysis of blow-up solutions for the nonlinear Schr\"{o}dinger equation with combined power-type nonlinearities
\[
iu_{t}+\Delta u=\lambda_1|u|^{p_1}u+\lambda_2|u|^{p_2}u.
\]
When $p_1=\frac{4}{N}$ and $0<p_2<\frac{4}{N}$, we prove the existence of blow-up solutions and find the sharp threshold mass of blow-up and global existence for this equation.
This is a complement to the result of Tao et al. (Comm. Partial Differential Equations 32: 1281-1343, 2007). Moreover, we investigate the dynamical properties of blow-up solutions, including $L^2$-concentration, blow-up rates and limiting profile. When $\frac{4}{N}<p_1<\frac{4}{N-2}$($4<p_1<\infty$ if $N=1$, $2<p_1<\infty$ if $N=2$), 
we prove that the blow-up solution with
bounded $\dot{H}^{s_c}$-norm must concentrate at least a fixed amount of the $\dot{H}^{s_c}$-norm
and, also, its $L^{p_c}$-norm must concentrate at least a fixed $L^{p_c}$-norm.


{\bf Keywords:} Nonlinear Schr\"{o}dinger equation; Blow-up solutions; Concentration; Limiting profile\\
\end{abstract}
\section{Introduction}
Because of important applications in physics, nonlinear Schr\"{o}dinger
equations attracted a great deal of attention from mathematicians in the past decades, see \cite{ca2003,ss,t} for a review.
We recall some known results about blow-up solutions for the classical nonlinear Schr\"{o}dinger equation
\begin{equation}\label{0}
iu_{t}+\Delta u=\lambda |u|^{p}u.
\end{equation}
Ginibre and Velo \cite{gv} established the local well-posedness of \eqref{0} in $H^1$(
see \cite{ca2003} for a review). Glassey \cite{gl} proved the existence of blow-up solutions
 for the negative energy and $|x|u_0\in L^2$. Ogawa and Tsutsumi \cite{ot} proved the existence
  of blow-up solutions in radial case without the restriction $|x|u_0\in L^2$. Weinstein
   \cite{we}, Zhang \cite{zj} obtained the sharp conditions of global existence for $L^2$-critical
 and $L^2$-supercritical nonlinearities. Moreover, for the $L^2$-critical nonlinearity, Weinstein
 \cite{we1} studied the structure and formation of singularity of blow-up solutions with
 critical mass by the concentration compact principle: the blow-up solution is close to
 the ground state in $H^1$ up to scaling and phase parameters, and also translation in
 the non-radial case.
Applying the variational methods, Merle and Rapha\"{e}l \cite{m2} improved Weinstein's results and obtained the sharp decomposition of blow-up solutions with small super-critical mass. By this sharp decomposition and spectral properties, Merle and Rapha\"{e}l \cite{m1,m2,m3,m4} obtained a large body of breakthrough works, such as sharp blow-up rates, profiles, etc. Hmidi and Keraani \cite{ke} established the profile decomposition of bounded sequences in $H^1$ and gave a new and simple proof for some dynamical properties of
blow-up solutions in
$H^1$. These results have been generalized to other kinds of Schr\"{o}dinger equations, see \cite{f3,f2,fzsjmaa,gq,lz,zz,z1,z2,z3}.

In this paper, we will investigate blow-up solutions of the nonlinear Schr\"{o}dinger equation with combined power-type nonlinearities
\begin{equation}\label{e}
\left\{
\begin{array}{l}
iu_{t}+\Delta u=\lambda_1|u|^{p_1}u+\lambda_2|u|^{p_2}u, \\
u(0,x) = u_0 (x),%
\end{array}%
\right.
\end{equation}
where $u(t,x):[0,T^*)\times \mathbb{R}^N \rightarrow \mathbb{C}$ is a complex valued function and $0<T^*\leq \infty$, $0<p_2<p_1< \frac{4}{N-2}$($0<p_2<p_1\leq\infty$ if $N=1$, $0<p_2<p_1<\infty$ if $N=2$). This equation arises as  the leading-order model for propagation of intense laser beams in an isotropic bulk medium, see Section 32.1 in \cite{fibook} for a detailed explanation. Equation \eqref{e} can also be considered as a simplified model resulting from the expansion of the nonlinear Schr\"{o}dinger equation with saturated nonlinearity, which is relevant in the description of Bose superfluids at zero temperature, in the Hartree approximation, see \cite{ss}.

In \cite{tao}, Tao et al. undertook a comprehensive study for \eqref{e}.
More precisely, they addressed questions
related to local and global well-posedness, finite time blow-up, and asymptotic
behaviour.
This equation has Hamiltonian
\begin{equation}\label{h}
E(u(t)):=\frac{1}{2}\int_{\mathbb{R}^N} |\nabla u(t,x)|^2 dx+\frac{\lambda_1}{p_1+2}\int_{\mathbb{R}^N}
|u(t,x)|^{p_1+2}dx +\frac{\lambda_2}{p_2+2}\int_{\mathbb{R}^N}
|u(t,x)|^{p_2+2}dx.
\end{equation}
But there
is no scaling invariance for this equation when $p_1\neq p_2$.

About the existence of blow-up solutions, they proved the following theorem.

\textbf{Theorem A.} Let $u_0\in \Sigma:=\{u\in H^1,~xu\in L^2\}$, $\lambda_1<0$, and $\frac{4}{N}<p_1\leq \frac{4}{N-2}$ with $N\geq 3$. Let $y_0:=Im \int r\bar{u}_0\partial_ru_0dx$ denote the weighted mass current and assume $y_0>0$. Then,
blow-up occurs in each of the following three cases:

1) $\lambda_2>0$, $0<p_2<p_1$, and $E(u_0)<0$;

2) $\lambda_2<0$, $\frac{4}{N}<p_2<p_1$, and $E(u_0)<0$;

3) $\lambda_2<0$, $0<p_2\leq \frac{4}{N}$, and $E(u_0)+CM(u_0)<0$ for some suitably large constant
C.

More precisely, in any of the above cases there exists $0<T^*\leq C\frac{\|xu_0\|_{L^2}^2}{y_0}$
such that
\[
\lim_{t\rightarrow T^*}\|\nabla u(t)\|_{L^2}=\infty.
\]

As far as we know, when $\lambda_1<0$, $\lambda_2>0$, $p_1=\frac{4}{N}$, $0<p_2< \frac{4}{N}$, the existence of blow-up solutions of \eqref{e} has not been proved yet. In this paper, we
first prove the existence of blow-up solutions, and then give the sharp threshold mass
 of blow-up and global existence in this case. Moreover, we will investigate some dynamical properties of
blow-up solutions of \eqref{e} with $L^2$-critical
or $L^2$-supercritical nonlinearity, including $L^2$-concentration, $\dot{H}^{s_c}$-concentration, $L^{p_c}$-concentration, blow-up rates, and limiting profile.

To solve these problems, we mainly use the ideas from Hmidi and Keraani \cite{ke} and Guo \cite{gq}. The dynamics of blow-up solutions for the $L^2$-critical and $L^2$-supercritical nonlinear Schr\"{o}dinger equation \eqref{0} has been discussed in Hmidi and Keraani \cite{ke} and Guo \cite{gq}, respectively. In these papers, the study of dynamics of blow-up solutions relies heavily on the scale invariance of \eqref{0}. Hence, the study of dynamics of blow-up solutions for \eqref{e}, which has no the scale invariance, is of particular interest. First, we prove the existence of blow-up solutions and find the sharp threshold mass
$\|Q\|_{L^2}$ of blow-up and global existence for \eqref{e}, where $Q$ is the ground state
solution of \eqref{ell}. Then, in order to overcome the loss of scale invariance, we use the ground state solution $Q$ of \eqref{ell}
to describe the dynamical behavior of blow-up solutions to \eqref{e}. In the $L^2$-supercritical case, we choose the ground states of equations \eqref{e1} and \eqref{e2} to describe some concentration properties of blow-up solutions to \eqref{e}.


This paper is organized as follows: in Section 2, we present some preliminaries. In section 3, we first prove the existence of blow-up solutions of \eqref{e}, and then give the sharp threshold mass of blow-up and global existence. In section 4, we will consider some dynamical properties of
blow-up solutions of \eqref{e} with $p_1=\frac{4}{N}$ and $0<p_2<\frac{4}{N}$, including $L^2$-concentration, blow-up rate, and limiting profile. In section 5, we will obtain some concentration properties of
blow-up solutions of \eqref{e} with $\frac{4}{N}<p_1<\frac{4}{N-2}$ and $0<p_2<p_1$.

\textbf{Notation.}
Throughout this paper, we use the following
notation. $C> 0$ will stand for a constant that may be different from
line to line when it does not cause any confusion.
 We often use the abbreviations
$ L^{r}=L^{r}(\mathbb{R}^{N})$, $H^s=H^s(\mathbb{R}^N)$ in what
follows.
 $\Sigma:=\{u\in H^1,~xu\in L^2\}$ denotes the energy space
equipped with the norm $\|u\|_{\Sigma}:=\|u\|_{H^1}+\|xu\|_{L^2}$.
 For $s \in \mathbb{R}$, the pseudo-differential operator $(-\Delta)^{s}$ is defined by $\widehat{(-\Delta)^{s}}f(\xi)=|\xi|^{2s}\hat{f}(\xi)$, where $\hat{}$ denotes Fourier transform. We also use
homogeneous Sobolev space $\dot{H}^s(\mathbb{R}^{N})=\{u \in
\mathcal{S}^\prime(\mathbb{R}^{N}) ; \int |\xi|^{2s}|\hat{f}(\xi)|^2d\xi<\infty \}$ with its norm defined by $\|f\|_{\dot{H}^s}=\|(-\Delta)^{s/2}f\|_{L^2}$, where $\mathcal{S}^\prime(\mathbb{R}^{N})$
denotes the space of tempered distribution on $\mathbb{R}^{N}$. In particular, we use notation: $s_c=\frac{N}{2}-\frac{2}{p_1}$ and $p_c=\frac{Np_1}{2}$. Therefore, it follows from the Sobolev embedding that $\dot{H}^{s_c}(\mathbb{R}^N)\hookrightarrow L^{p_c}(\mathbb{R}^N)$.

\section{Preliminaries}
Firstly, let us recall the local theory for the initial value problem \eqref{e} established in \cite{ca2003}.
\begin{proposition}
Let $u_0 \in H^{1}$, $0<p_1,p_2<\frac{4}{N-2}$($0<p_1,p_2<\infty$ if $N=1$, $0<p_1,p_2<\infty$ if $N=2$). Then, there exists $T = T(\|u_0
\|_{H^1})$ such that \eqref{e} admits a unique solution $u\in C([0,T],H^1)$. Let $[0,T^{\ast })$ be the maximal time interval on which the
solution $u$ is well-defined, if $T^{\ast }< \infty $, then $\|
u(t)\| _{H^{1}}\rightarrow \infty $ as $t\uparrow T^{\ast } $. Moreover, for all $0\leq t<T^*$, the solution
$ u(t)$ satisfies the following conservation of mass and energy
\[
\|u(t)\|_{L^2}=\|u_0\|_{L^2},
\]
\[
E(u(t))=E(u_0 ),
\]
 where $E(u(t))$ defined by \eqref{h}.
\end{proposition}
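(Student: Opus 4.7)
The plan is to prove Proposition 2.1 by a Banach fixed point argument applied to the Duhamel formulation
\[
u(t)=e^{it\Delta}u_{0}-i\int_{0}^{t}e^{i(t-s)\Delta}\bigl(\lambda_{1}|u|^{p_{1}}u+\lambda_{2}|u|^{p_{2}}u\bigr)\,ds,
\]
following the standard $H^{1}$ subcritical framework described in Cazenave \cite{ca2003}. First, I would fix an admissible Strichartz pair $(q_{j},r_{j})$ adapted to each nonlinearity $|u|^{p_{j}}u$, namely $r_{j}=p_{j}+2$ together with the conjugate Strichartz exponent, and introduce the complete metric space
\[
X_{T,M}:=\Bigl\{u\in C([0,T];H^{1})\cap L^{q_{1}}(0,T;W^{1,r_{1}})\cap L^{q_{2}}(0,T;W^{1,r_{2}}):\|u\|_{X_{T,M}}\leq M\Bigr\},
\]
endowed with an $L^{q_{j}}L^{r_{j}}$-type distance (so closedness holds even though we do not control derivatives in the distance), with $M$ of order $\|u_{0}\|_{H^{1}}$.

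Next I would combine the Strichartz estimates for $e^{it\Delta}$ with the Sobolev embedding $H^{1}\hookrightarrow L^{p_{j}+2}$, which is licit precisely under the subcritical hypothesis $p_{j}<\tfrac{4}{N-2}$ (or $p_{j}<\infty$ in dimensions $N=1,2$), to estimate the nonlinear terms. Using H\"older in time and the elementary inequality
\[
\bigl||u|^{p}u-|v|^{p}v\bigr|\lesssim (|u|^{p}+|v|^{p})|u-v|,
\]
one obtains a bound of the form $CT^{\theta_{1}}M^{p_{1}+1}+CT^{\theta_{2}}M^{p_{2}+1}$ for some $\theta_{j}>0$ on both the self-map and the contraction estimate. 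Taking $T=T(\|u_{0}\|_{H^{1}})$ small enough makes the Duhamel map a strict contraction on $X_{T,M}$, producing a unique fixed point. The blow-up alternative then follows from a standard continuation argument: since $T$ depends only on $\|u_{0}\|_{H^{1}}$, if $\|u(t)\|_{H^{1}}$ remained bounded up to $T^{*}<\infty$ one could extend beyond $T^{*}$, contradicting maximality.

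Finally, the two conservation laws are derived by formal manipulation and justified by the density of smooth solutions. For mass, multiplying the equation by $\bar{u}$, integrating over $\mathbb{R}^{N}$, and taking imaginary parts kills the nonlinear and Laplacian contributions, yielding $\tfrac{d}{dt}\|u\|_{L^{2}}^{2}=0$. For energy, pairing the equation with $\bar u_{t}$ and taking real parts gives $\tfrac{d}{dt}E(u(t))=0$ after noticing that each nonlinear term is the gradient in $u$ of $\tfrac{\lambda_{j}}{p_{j}+2}|u|^{p_{j}+2}$. Rigor is obtained by regularizing the initial datum, applying these identities to the smooth approximants, and passing to the limit using the continuous dependence built into the fixed point argument.

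The only mild obstacle is the presence of \emph{two} different powers with possibly very different integrability requirements; this is handled by simultaneously working in the intersection of the two Strichartz spaces associated with $p_{1}$ and $p_{2}$, so that each nonlinear contribution is controlled in its own natural norm, rather than trying to force both into a single Strichartz pair.
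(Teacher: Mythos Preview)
Your sketch is correct and follows the standard $H^{1}$-subcritical contraction argument from Cazenave \cite{ca2003}, which is precisely what the paper invokes: the proposition is stated there as a recall of known results with a citation to \cite{ca2003} and no proof is given in the paper itself. There is nothing to compare beyond noting that your outline is exactly the argument the cited reference carries out.
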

For more specific results concerning the Cauchy problem \eqref{e}, we refer
the reader to \cite{ca2003}. In addition, by some basic calculations, we have the following
result(see \cite{ca2003}).

\begin{lemma}
Assume that $u_0 \in \Sigma $, and the corresponding
solution $u$ of \eqref{e} exists on the interval $[0,T^*)$. Then, for all $t\in [0,T^*)$, it follows $u(t) \in \Sigma$. Moreover, let $J(t)=\int_{\mathbb{R}^N} |xu(t,x)|^2dx$, then
\begin{equation}\label{l1}
J'(t)=-4Im\int_{\mathbb{R}^N} u(t,x)x\cdot \nabla \bar{u}(t,x)dx,
\end{equation}
and
\begin{equation}\label{l2}
J''(t)=8\int_{\mathbb{R}^N} |\nabla u(t,x)|^2dx+\frac{4N\lambda_1p_1}{p_1+2}\int_{\mathbb{R}^N} |u(t,x)|^{p_1+2}dx+\frac{4N\lambda_2p_2}{p_2+2}\int_{\mathbb{R}^N} |u(t,x)|^{p_2+2}dx.
\end{equation}
\end{lemma}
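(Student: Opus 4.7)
The plan is to verify the lemma in three stages: first showing that $\Sigma$ is preserved by the flow, then formally deriving the two virial identities \eqref{l1}--\eqref{l2} for sufficiently smooth solutions, and finally passing to the limit by density.

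For the $\Sigma$-regularity of $u(t)$, I would approximate $u_0$ by a sequence $u_{0,n}\in \mathcal{S}(\mathbb{R}^N)$ converging to $u_0$ in $\Sigma$, with $u_n$ the corresponding solution. Applying the operator $x$ to the Duhamel representation of $u_n$ and using the commutator identity $[x,e^{it\Delta}]=2it\,e^{it\Delta}\nabla$ together with Strichartz estimates on the nonlinear terms (whose $H^1$ bounds are furnished by Proposition 2.1) yields a closed Gronwall-type inequality for $\|x u_n(t)\|_{L^2}$. This propagates the $\Sigma$ bound up to the maximal time of existence, and continuous dependence in $H^1$ transfers the conclusion to $u(t)$.

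The first identity \eqref{l1} follows by differentiating $J(t)=\int|x|^2|u|^2\,dx$ under the integral sign, substituting $u_t=i\Delta u-i\lambda_1|u|^{p_1}u-i\lambda_2|u|^{p_2}u$, and noticing that the nonlinear contributions to $|x|^2\bar u\,u_t$ are purely imaginary and hence killed by $2\,\mathrm{Re}$. What remains is $-2\,\mathrm{Im}\int|x|^2\bar u\,\Delta u\,dx$; one integration by parts using $\nabla(|x|^2\bar u)=2x\bar u+|x|^2\nabla\bar u$, together with the fact that $|x|^2|\nabla u|^2$ is real, produces exactly $-4\,\mathrm{Im}\int u\,x\cdot\nabla\bar u\,dx$. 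For \eqref{l2} I would differentiate \eqref{l1} once more, substitute the equation for both $u_t$ and $\bar u_t$, and group terms into a kinetic piece and two nonlinear pieces. The kinetic piece yields $8\|\nabla u\|_{L^2}^2$ after two integrations by parts, while each nonlinearity $|u|^{p_j}u$ contributes $\frac{4N\lambda_j p_j}{p_j+2}\int|u|^{p_j+2}\,dx$ via the divergence identity $\int x\cdot\nabla(|u|^{p_j+2})\,dx=-N\int|u|^{p_j+2}\,dx$.

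The main technical obstacle is that these manipulations require $u$ to be smoother than merely $H^1$. I would resolve this in the standard way: carry out all computations for the smooth approximating solutions $u_n$ first, then pass to the limit $n\to\infty$ using continuous dependence in $H^1\cap\Sigma$ combined with the Sobolev embedding $H^1\hookrightarrow L^{p_j+2}$ (valid since $0<p_j<4/(N-2)$) to control the nonlinear integrands uniformly in $n$. The only delicate point is the term $\mathrm{Im}\int u\,x\cdot\nabla\bar u\,dx$ in \eqref{l1}, but this is bounded by $\|xu\|_{L^2}\|\nabla u\|_{L^2}$ so convergence in $\Sigma$ suffices.
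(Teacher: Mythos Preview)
Your proposal is correct and follows the standard route (approximation by smooth data, formal derivation of the virial identities, passage to the limit via continuous dependence in $\Sigma$). The paper itself does not give a proof of this lemma at all: it simply states that ``by some basic calculations'' the result holds and refers the reader to Cazenave's monograph~\cite{ca2003}. Your sketch is essentially the argument one finds there (see in particular Proposition~6.5.1 of that reference), so there is nothing to compare---you have supplied what the paper outsources.
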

Finally, we recall the following useful result of M. Weinstein \cite{we} relating the ground states of \eqref{ell} with the best constant in a Gagliardo-Nirenberg inequality.
\begin{lemma}\cite{we}
Let $p=\frac{4}{N}$ and $Q$ be the ground state solution of the following elliptic equation
\begin{equation}\label{ell}
-\Delta Q+Q=|Q|^{p+2}Q~~~in~~\mathbb{R}^N.
\end{equation}
 It follows that the optimal constant in the Gagliardo-Nirenberg inequality
\begin{equation}\label{gn}
\frac{1}{p+2}\|u\|_{L^{p+2}}^{p+2} \leq \frac{C}{2}\|u\|_{L^2}^p\|\nabla u\|_{L^2}^2,
\end{equation}
is $C=\|Q\|_{L^2}^{-p}$.
\end{lemma}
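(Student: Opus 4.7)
This is Weinstein's classical theorem; the plan is to characterise the sharp constant variationally, show the extremal is attained at (a rescaling of) $Q$, and then evaluate the extremal value using Pohozaev--Nehari identities. Set
\[
\alpha:=\inf_{u\in H^1\setminus\{0\}}J(u),\qquad J(u):=\frac{\|\nabla u\|_{L^2}^2\,\|u\|_{L^2}^p}{\|u\|_{L^{p+2}}^{p+2}},
\]
so that the sharp constant in the stated Gagliardo--Nirenberg inequality is $C=\frac{2}{(p+2)\alpha}$.

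First I would prove that $\alpha$ is attained. The critical power $p=4/N$ makes $J$ invariant under the two-parameter rescaling $u(x)\mapsto \mu u(\lambda x)$. Given a minimising sequence $\{u_n\}$, I would replace each $u_n$ by its Schwarz symmetric decreasing rearrangement $u_n^*$ (which preserves the $L^2$ and $L^{p+2}$ norms and does not increase $\|\nabla u_n\|_{L^2}$), and then use the two-parameter scaling to normalise $\|u_n^*\|_{L^2}=\|\nabla u_n^*\|_{L^2}=1$. The resulting sequence is bounded in the radial subspace of $H^1(\mathbb{R}^N)$, so by Strauss' compact embedding $H^1_{\mathrm{rad}}\hookrightarrow L^{p+2}$ (valid since $2<p+2<2^*$) a subsequence converges strongly in $L^{p+2}$ to a non-zero radial limit $u_\infty$. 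Weak lower semicontinuity of the $L^2$ and $\dot{H}^1$ norms then forces $J(u_\infty)\leq \alpha$, hence $J(u_\infty)=\alpha$.

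Next I would derive the Euler--Lagrange equation: $u_\infty$ satisfies an equation of the form $-a\Delta u_\infty+b u_\infty=c|u_\infty|^{p}u_\infty$ with positive coefficients expressible through the $L^2$, $\dot{H}^1$, and $L^{p+2}$ norms of $u_\infty$. A rescaling $Q(x)=\mu u_\infty(\lambda x)$ with suitable $\lambda,\mu>0$ normalises the coefficients to match the ground-state equation in the statement. Finally, to evaluate $J(Q)$ I would combine two integral identities: testing the ground-state equation against $Q$ gives a Nehari identity, while testing against $x\cdot\nabla Q$ and integrating by parts gives a Pohozaev identity. At the $L^2$-critical power $p=4/N$, this $2\times 2$ linear system determines $\|\nabla Q\|_{L^2}^2$ and $\|Q\|_{L^{p+2}}^{p+2}$ as explicit multiples of $\|Q\|_{L^2}^2$, yielding $\alpha=J(Q)=\frac{N}{N+2}\|Q\|_{L^2}^p$ and hence $C=\|Q\|_{L^2}^{-p}$. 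The main obstacle is the compactness step: both translation and scaling non-compactness must be broken, which is handled respectively by the rearrangement and by the two-parameter normalisation; once a minimiser is in hand, the rest is standard calculus-of-variations bookkeeping.
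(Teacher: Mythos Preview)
The paper does not supply its own proof of this lemma: it is stated with a citation to Weinstein~\cite{we}, followed only by the remark that the case $N=1$ is due to Nagy. So there is nothing to compare directly, but your sketch is precisely Weinstein's original variational argument and is correct in outline.

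Two small remarks. First, the compact embedding $H^1_{\mathrm{rad}}\hookrightarrow L^{p+2}$ that you invoke (Strauss) requires $N\geq 2$; for $N=1$ radial symmetrisation does not break translation non-compactness, and one needs either Nagy's direct ODE argument or a concentration-compactness step instead. This is exactly why the paper singles out the one-dimensional case in its remark. Second, note that the ground-state equation as written in the statement contains a typographical slip (the nonlinearity should be $|Q|^{p}Q$, not $|Q|^{p+2}Q$), and you have silently used the correct form when writing the Euler--Lagrange equation.
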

\textbf{Remark.} When $N=1$, this lemma was proved by Nagy in \cite{na}.

%
%
%
%

\section{The sharp threshold mass of blow-up and global existence}

By the local well-posedness theory of the nonlinear Schr\"{o}dinger equation, the solution of \eqref{e} with small initial data exists globally, and for some large initial data, the solution may blow up in finite time.
Thus, whether there exists a sharp threshold of blow-up and global existence for \eqref{e} is of particular interest. On the other hand, the following problems are very important
from the view-point of physics. Under what
conditions will the condensate become unstable to collapse (blow-up)? And under what
conditions will the condensate exist for all time (global existence)? Especially the
sharp thresholds for blow-up and global existence are pursued strongly (see \cite{ca2003,f1,ss,we,zj,zz} and their references).

To solve this problem for \eqref{e}, there exists two major difficulties. One is the loss of scale invariance for \eqref{e}; the other is that the second order derivative of $J(t)=\int_{\mathbb{R}^N} |xu(t,x)|^2dx$ is the following form:
\begin{equation*}\label{l2}
J''(t)=16E(u_0)+\frac{4Np_2-16}{p_2+2}\int_{\mathbb{R}^N} |u(t,x)|^{p_2+2}dx.
\end{equation*}
Because $\int_{\mathbb{R}^N} |u(t,x)|^{p_2+2}dx$ is a positive uncertain function, which may be unbounded with respect to time $t$, it is hard to choose $E(u_0)$ to ensure the existence of blow-up solutions.

In the following theorem, by using the scaling argument and the inequality \eqref{gn}, we obtain the existence of blow-up solutions for \eqref{e} and the sharp threshold mass of blow-up and global existence for \eqref{e}.

\begin{theorem}
Let $u_0\in H^1$, $\lambda_1=-1$, $\lambda_2=1$, $p_1=\frac{4}{N}$ and $0<p_2<\frac{4}{N}$.   Assume that $Q$ is the ground state solution of \eqref{ell}. Then, we have the following sharp threshold mass of blow-up and global existence.

(i) If $\|u_0\|_{L^2}<\|Q\|_{L^2}$, then the solution of \eqref{e} exists
globally.

(ii) If the initial data $u_0=c\rho^{\frac{N}{2}} Q(\rho x)$ satisfies $|x|u_0\in L^2$, where the complex number
$c$ satisfying $|c|\geq 1$, and the real number $\rho >0$, then the
 solution $u$ of \eqref{e} with initial data $u_0$ blows up in finite time.
\end{theorem}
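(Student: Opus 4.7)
The plan is to prove (i) via the sharp Gagliardo--Nirenberg bound of Lemma~2.3 and (ii) via the virial identity of Lemma~2.2, together with an explicit energy computation for the soliton-profile initial data.

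For (i), I would apply the sharp Gagliardo--Nirenberg inequality to the $L^2$-critical nonlinear term and use conservation of mass together with the non-negativity of the defocusing $L^{p_2+2}$ piece of the energy to obtain
\[
E(u_0) \;\geq\; \frac{1}{2}\Bigl(1-\bigl(\|u_0\|_{L^2}/\|Q\|_{L^2}\bigr)^{p_1}\Bigr)\|\nabla u(t)\|_{L^2}^2.
\]
Under the hypothesis $\|u_0\|_{L^2}<\|Q\|_{L^2}$ the prefactor is strictly positive, which gives a time-uniform bound on $\|\nabla u(t)\|_{L^2}$; the blow-up alternative of Proposition~2.1 then yields global $H^1$ existence.

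For (ii), I would substitute the energy identity into Lemma~2.2 to rewrite the variance as
\[
J''(t) \;=\; 16E(u_0) + \frac{4(Np_2-4)}{p_2+2}\|u(t)\|_{L^{p_2+2}}^{p_2+2},
\]
where the coefficient of $\|u\|_{L^{p_2+2}}^{p_2+2}$ is strictly negative because $p_2<4/N$. For the soliton-profile datum $u_0=c\rho^{N/2}Q(\rho x)$ I would use the identity $\frac{1}{p_1+2}\|Q\|_{L^{p_1+2}}^{p_1+2}=\frac{1}{2}\|\nabla Q\|_{L^2}^2$, which follows from the vanishing of the pure $L^2$-critical part of the energy at the ground state, to get
\[
E(u_0) \;=\; \frac{|c|^2\rho^2\|\nabla Q\|_{L^2}^2}{2}\,(1-|c|^{p_1}) + \frac{|c|^{p_2+2}\rho^{Np_2/2}}{p_2+2}\|Q\|_{L^{p_2+2}}^{p_2+2}.
\]
For $|c|\ge 1$ the critical part of $E(u_0)$ is non-positive, and together with the strictly negative subcritical contribution to $J''$ I would aim to conclude $J''(t)\le -c_0<0$; two integrations would then force $J(t)$ to reach $0$ in finite time, contradicting $J(t)\geq 0$, and hence $\|\nabla u(t)\|_{L^2}\to\infty$.

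The hardest part will be the threshold case $|c|=1$: the critical part of $E(u_0)$ vanishes while $E(u_0)$ itself is strictly positive, so the crude bound $J''(t)\le 16E(u_0)$ has the wrong sign. Closing the argument here should require extracting a time-uniform positive lower bound on $\|u(t)\|_{L^{p_2+2}}^{p_2+2}$ so that the strictly negative subcritical term in the virial dominates $16E(u_0)$; I expect this to come from combining the conservation laws with the sharp Gagliardo--Nirenberg bound on $\|u(t)\|_{L^{p_1+2}}^{p_1+2}$, where the absence of scaling invariance for the full equation makes the bookkeeping delicate.
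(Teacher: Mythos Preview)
Your treatment of (i) matches the paper's exactly.

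For (ii) the paper takes a shorter route than you propose. Rather than seeking a time-uniform lower bound on $\|u(t)\|_{L^{p_2+2}}^{p_2+2}$, the paper simply exploits the freedom in $\rho$: since $Np_2/2<2$, the expression you computed,
\[
E(u_0)\;=\;-\tfrac{|c|^2\rho^2}{2}\bigl(|c|^{p_1}-1\bigr)\|\nabla Q\|_{L^2}^2 \;+\; \tfrac{|c|^{p_2+2}\rho^{Np_2/2}}{p_2+2}\|Q\|_{L^{p_2+2}}^{p_2+2},
\]
becomes negative once $\rho^{\,2-Np_2/2}$ exceeds $\dfrac{2|c|^{p_2}\|Q\|_{L^{p_2+2}}^{p_2+2}}{(p_2+2)(|c|^{p_1}-1)\|\nabla Q\|_{L^2}^2}$. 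With $E(u_0)<0$ in hand, the subcritical term in $J''$ is already non-positive, so $J''(t)\le 16E(u_0)<0$ follows immediately and the standard concavity argument closes. No control of $\|u(t)\|_{L^{p_2+2}}$ along the flow is needed.

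Your worry about the threshold $|c|=1$ is well-founded, and the paper does not resolve it either: the paper's choice of $\rho$ requires $|c|^{p_1}-1>0$, i.e.\ $|c|>1$, while for $|c|=1$ one has $E(u_0)=\tfrac{\rho^{Np_2/2}}{p_2+2}\|Q\|_{L^{p_2+2}}^{p_2+2}>0$ for every $\rho>0$ (and even $J''(0)>0$). Thus the statement as literally written---blow-up for \emph{every} $|c|\ge1$ and \emph{every} $\rho>0$---is stronger than what the proof actually establishes; the argument really produces, for each $|c|>1$, a blow-up datum with $\|u_0\|_{L^2}=|c|\,\|Q\|_{L^2}$ by taking $\rho$ sufficiently large. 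Your proposed rescue of the case $|c|=1$ via a uniform positive lower bound on $\|u(t)\|_{L^{p_2+2}}^{p_2+2}$ is not the paper's route and would require a genuinely new ingredient.
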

\textbf{Remark:} From Remark 6.6.2 in \cite{ca2003}, we infer that the critical value about the initial data for global existence of \eqref{0} and \eqref{e} is the same.
\begin{proof}
(i) We deduce from \eqref{h} and \eqref{gn} that
\begin{align*}
E(u_0)=E(u(t)) & =\frac{1}{2}\int_{\mathbb{R}^N} |\nabla u(t,x)|^2 dx-\frac{1}{p_1+2}\int_{\mathbb{R}^N}
|u(t,x)|^{p_1+2}dx +\frac{1}{p_2+2}\int_{\mathbb{R}^N}
|u(t,x)|^{p_2+2}dx\nonumber\\& \geq \left(\frac{1}{2}-\frac{\|u_0\|_{L^2}^{p_1}}{2\|Q\|_{L^2}^{p_1}}\right)\|\nabla u(t)\|_{L^2}^2.
\end{align*}
Due to $\|u_0\|_{L^2}<\|Q\|_{L^2}$, we have that $\|\nabla u(t)\|_{L^2}$ is uniformly bounded for all time $t$. Therefore,
(i) follows from the conservation of mass and Proposition 2.1.

(ii) Since $|x|u_0\in L^2$, $J(t)=\int_{\mathbb{R}^N} |xu(t,x)|^2dx$ is well-defined, and it follows from Lemma 2.2 that
\begin{equation}\label{j11}
J''(t)=16E(u_0)+\frac{4Np_2-16}{p_2+2}\int_{\mathbb{R}^N} |u(t,x)|^{p_2+2}dx.
\end{equation}
 By the
definition of initial data $u_0(x)=c\rho^{\frac{N}{2}} Q(\rho x)$ and the Poho\u{z}aev identity for equation \eqref{ell}, i.e., $\frac{1}{2}\|\nabla Q\|_{L^2}^2=\frac{1}{p_1+2}\|Q\|^{p_1+2}_{L^{p_1+2}}$, we deduce that
\begin{align}\label{h1}
E(u_0)&=\frac{|c|^2\rho^2}{2}\int_{\mathbb{R}^N} |\nabla Q(x)|^2 dx-\frac{|c|^{p_1+2}\rho^2}{p_1+2}\int_{\mathbb{R}^N}
|Q(x)|^{p_1+2}dx +\frac{|c|^{p_2+2}\rho^{\frac{N}{2}p_2}}{p_2+2}\int_{\mathbb{R}^N}
|Q(x)|^{p_2+2}dx\nonumber \\
&=-\frac{|c|^2\rho^2}{2}(|c|^{p_1}-1)\|\nabla Q\|_{L^2}^2+\frac{|c|^{p_2+2}\rho^{\frac{N}{2}p_2}}{p_2+2}\int_{\mathbb{R}^N}
|Q(x)|^{p_2+2}dx.
\end{align}
Now, taking $\rho$ such that
\[
\frac{2|c|^{p_2}\|Q\|^{p_2+2}_{L^{p_2+2}}}{(p_2+2)(|c|^{p_1}-1)\|\nabla Q\|_{L^2}^2}< \rho^{2-\frac{N}{2}p_2}.
\]
This implies $E(u_0) < 0$. It follows from \eqref{j11} that $J''(t)<16 E(u_0) < 0$.
By the standard concave argument, the solution
$u$ of \eqref{e} with the initial data $u_0$ blows up in finite time.
\end{proof}
\section{Dynamic of blow-up solutions in the case of $L^2$-critical}
In this section, we investigate some dynamical properties of blow-up solutions for \eqref{e} with $\lambda_1=-1$, $\lambda_2=1$, $p_1=\frac{4}{N}$, and $0<p_2<\frac{4}{N}$. These results are closed related to the results obtained for the classical nonlinear Schr\"{o}dinger equation \eqref{0} with $p=\frac{4}{N}$ by Hmidi and Keraani \cite{ke}. For this aim, we firstly recall the following refined compactness result which can be proved by using the profile
decomposition of bounded sequences in $H^1$ and the inequality \eqref{gn}, see \cite{ke}.
\begin{lemma}
Let $\{u_n\}_{n=1}^{\infty}$ be a bounded sequence in $H^1$, such that
\begin{equation*}
\limsup_{n\rightarrow \infty}\|\nabla u_n\|_{L^2}\leq M,~~~\limsup_{n\rightarrow \infty}\|u_n\|_{L^{4/N+2}}\geq m>0.
\end{equation*}
Then, there exist $V\in H^1$ and $\{x_n\}_{n=1}^{\infty}\subset \mathbb{R}^N$ such that, up to a subsequence,
\begin{equation*}
u_n(\cdot+x_n)\rightharpoonup V~~weakly~in~H^1
\end{equation*}
with
\begin{equation*}
\|V\|_{L^2}\geq \left(\frac{N}{N+2}\right)^{\frac{N}{4}}\frac{m^{N/2+1}}{M^{N/2}}\|Q\|_{L^2}.
\end{equation*}
where $Q$ is the ground state solution of \eqref{ell}.
\end{lemma}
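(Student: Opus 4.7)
The plan is to apply the $H^1$ profile decomposition of bounded sequences and extract a single profile with sufficient $L^2$-mass, using the sharp Gagliardo--Nirenberg inequality (Lemma 2.3) as the bridge between the $L^{4/N+2}$ hypothesis and the $L^2$ conclusion.

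First I would extract, up to a subsequence, a profile decomposition of the form
\[
u_n(x) = \sum_{j=1}^{\ell} V^j\bigl(x - x_n^j\bigr) + r_n^{\ell}(x),
\]
with pairwise orthogonal cores ($|x_n^j - x_n^k|\to\infty$ for $j\neq k$), with the Pythagorean expansion
\[
\|\nabla u_n\|_{L^2}^2 = \sum_{j=1}^{\ell}\|\nabla V^j\|_{L^2}^2 + \|\nabla r_n^{\ell}\|_{L^2}^2 + o_n(1),
\]
and with the asymptotic decoupling
\[
\|u_n\|_{L^{4/N+2}}^{4/N+2} = \sum_{j=1}^{\ell}\|V^j\|_{L^{4/N+2}}^{4/N+2} + \|r_n^{\ell}\|_{L^{4/N+2}}^{4/N+2} + o_n(1),
\]
where $\limsup_n \|r_n^{\ell}\|_{L^{4/N+2}} \to 0$ as $\ell\to\infty$. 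Letting first $n\to\infty$ and then $\ell\to\infty$, the hypothesis on the $L^{4/N+2}$-norm gives $\sum_{j\ge 1}\|V^j\|_{L^{4/N+2}}^{4/N+2}\ge m^{4/N+2}$.

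Next I would apply Lemma 2.3 to each profile to obtain
\[
\|V^j\|_{L^{4/N+2}}^{4/N+2} \le \frac{N+2}{N}\,\|Q\|_{L^2}^{-4/N}\,\|V^j\|_{L^2}^{4/N}\,\|\nabla V^j\|_{L^2}^2,
\]
then bound $\|V^j\|_{L^2}^{4/N}\le \bigl(\sup_k\|V^k\|_{L^2}^2\bigr)^{2/N}$ uniformly in $j$ and sum using $\sum_j\|\nabla V^j\|_{L^2}^2 \le M^2$. This yields
\[
m^{4/N+2} \le \frac{N+2}{N}\,\|Q\|_{L^2}^{-4/N}\,\Bigl(\sup_k\|V^k\|_{L^2}^2\Bigr)^{2/N} M^2,
\]
and rearranging gives $\sup_k \|V^k\|_{L^2} \ge (N/(N+2))^{N/4}\, m^{N/2+1}\, M^{-N/2}\, \|Q\|_{L^2}$. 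Since $\sum_k \|V^k\|_{L^2}^2 \le \limsup_n \|u_n\|_{L^2}^2 <\infty$, the supremum is attained at some index $j_0$; I set $V:=V^{j_0}$ and $x_n:=x_n^{j_0}$, and the construction of the decomposition together with the orthogonality of cores yields $u_n(\cdot + x_n)\rightharpoonup V$ weakly in $H^1$.

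The main obstacle is securing the asymptotic $L^{4/N+2}$-decoupling of the profiles and the smallness of the remainder in $L^{4/N+2}$; this is the nontrivial analytic input underlying the Hmidi--Keraani decomposition and rests on combining the Sobolev embedding $H^1\hookrightarrow L^{4/N+2}$ with the spatial orthogonality of the cores. Once those two facts are in hand, the argument reduces to a profile-by-profile application of the sharp Gagliardo--Nirenberg inequality and a pigeon-hole argument on the $L^2$-masses.
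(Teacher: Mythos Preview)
Your proposal is correct and is exactly the argument the paper has in mind: the paper does not give its own proof of this lemma but refers to \cite{ke}, stating that it follows from the $H^1$ profile decomposition together with the sharp Gagliardo--Nirenberg inequality \eqref{gn}, which is precisely the route you take. The constants in your computation check out against the stated bound.
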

\begin{theorem}($L^2$-concentration)
Let $u_0\in H^1$, $\lambda_1=-1$, $\lambda_2=1$, $p_1=\frac{4}{N}$, and $0<p_2<\frac{4}{N}$. If the solution $u$ of \eqref{e} blows up in finite time $T^*>0$.
Let $a(t)$ be a real-valued nonnegative function defined on $[0,T^*)$ satisfying $a(t)\|\nabla u(t)\|_{L^2}\rightarrow \infty $ as $t\rightarrow T^*$. Then there exists $x(t)\in \mathbb{R}^{N}$ such that
\begin{equation}\label{41}
\liminf_{t\rightarrow T^*}\int_{|x-x(t)|\leq a(t)}|u(t,x)|^2dx\geq \int_{\mathbb{R}^N} |Q(x)|^2dx.
\end{equation}
where $Q$ is the ground state solution of \eqref{ell}.



\end{theorem}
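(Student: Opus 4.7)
The plan is to adapt the Hmidi--Keraani concentration argument \cite{ke} by constructing a rescaled sequence whose weak limit carries at least the mass of $Q$, the key subtlety being that the subcritical term $|u|^{p_2}u$ must be shown to disappear in the limit despite the loss of scaling invariance.

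First I would fix an arbitrary sequence $t_n \uparrow T^{\ast}$ and set the scaling parameter
\[
\rho_n := \frac{\|\nabla Q\|_{L^2}}{\|\nabla u(t_n)\|_{L^2}}, \qquad v_n(x) := \rho_n^{N/2}\, u(t_n, \rho_n x),
\]
so that $\rho_n \to 0$, $\|v_n\|_{L^2} = \|u_0\|_{L^2}$, and $\|\nabla v_n\|_{L^2} = \|\nabla Q\|_{L^2}$. The goal is to pass to a limit $V$ whose $L^2$-mass is at least $\|Q\|_{L^2}$ via Lemma 4.1, so I need a uniform lower bound on $\|v_n\|_{L^{p_1+2}}$.

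Next I would use conservation of energy, written as
\[
\tfrac{1}{p_1+2}\|u(t_n)\|_{L^{p_1+2}}^{p_1+2}
= \tfrac{1}{2}\|\nabla u(t_n)\|_{L^2}^2 - E(u_0) + \tfrac{1}{p_2+2}\|u(t_n)\|_{L^{p_2+2}}^{p_2+2},
\]
and multiply by $\rho_n^2$. Because $p_1 = 4/N$, the scaling gives
\[
\tfrac{\rho_n^2}{p_1+2}\|u(t_n)\|_{L^{p_1+2}}^{p_1+2}
= \tfrac{1}{p_1+2}\|v_n\|_{L^{p_1+2}}^{p_1+2}.
\]
The key estimate is that the subcritical contribution is negligible: by Gagliardo--Nirenberg, $\|u(t_n)\|_{L^{p_2+2}}^{p_2+2} \lesssim \|u_0\|_{L^2}^{p_2+2-\theta}\|\nabla u(t_n)\|_{L^2}^{\theta}$ with $\theta = Np_2/2 < 2$, so $\rho_n^2 \|u(t_n)\|_{L^{p_2+2}}^{p_2+2} \lesssim \rho_n^{2-\theta} \to 0$. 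Combining with $\rho_n^2 E(u_0) \to 0$, we obtain
\[
\|v_n\|_{L^{p_1+2}}^{p_1+2} \longrightarrow \tfrac{p_1+2}{2}\|\nabla Q\|_{L^2}^2 = \|Q\|_{L^{p_1+2}}^{p_1+2},
\]
where the last equality is Weinstein's identity (equality case of Lemma 2.3 at $Q$). This is the main technical step; I expect the subcritical estimate above to be the one place where care is needed, since it is precisely the mechanism replacing the missing scaling invariance.

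With $\limsup_n \|v_n\|_{L^{p_1+2}} \geq \|Q\|_{L^{p_1+2}}$ and $\|\nabla v_n\|_{L^2} = \|\nabla Q\|_{L^2}$, Lemma 4.1 provides $V \in H^1$ and $x_n \in \mathbb{R}^N$ with $v_n(\cdot + x_n) \rightharpoonup V$ in $H^1$ and
\[
\|V\|_{L^2} \geq \left(\tfrac{N}{N+2}\right)^{N/4}\frac{\|Q\|_{L^{p_1+2}}^{N/2+1}}{\|\nabla Q\|_{L^2}^{N/2}}\|Q\|_{L^2} = \|Q\|_{L^2},
\]
the last equality being a direct algebraic consequence of Weinstein's identity. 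Finally, for any $R > 0$, weak lower semicontinuity and the change of variables $z = \rho_n(y+x_n)$ give
\[
\int_{|x|\leq R}|V(x)|^2\,dx \leq \liminf_{n\to\infty}\int_{|z-\rho_n x_n|\leq \rho_n R}|u(t_n,z)|^2\,dz.
\]
Since $\rho_n R / a(t_n) = R\,\|\nabla Q\|_{L^2}/(a(t_n)\|\nabla u(t_n)\|_{L^2}) \to 0$, the ball $\{|z-\rho_n x_n|\leq \rho_n R\}$ is eventually contained in $\{|z-x(t_n)|\leq a(t_n)\}$ with $x(t_n) := \rho_n x_n$. Letting $R \to \infty$ yields \eqref{41} along the sequence $t_n$, and since $t_n$ was arbitrary, the $\liminf$ statement follows; extending $x(t)$ from the subsequence to all of $[0,T^{\ast})$ by a standard selection completes the proof.
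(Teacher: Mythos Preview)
Your proposal is correct and follows essentially the same route as the paper: the rescaling $v_n$, the Gagliardo--Nirenberg control of the subcritical term (exponent $Np_2/2<2$), the appeal to Lemma~4.1 to produce $V$ with $\|V\|_{L^2}\geq\|Q\|_{L^2}$, and the passage from balls of radius $\rho_n R$ to balls of radius $a(t_n)$ are all identical to the paper's argument. The only cosmetic difference is in producing $x(t)$: the paper first proves the inequality with $\sup_{y\in\mathbb{R}^N}$ in place of a specific center, then notes that $g(y)=\int_{|x-y|\leq a(t)}|u(t,x)|^2\,dx$ is continuous and vanishes at infinity so a maximizer $x(t)$ exists for every $t$---this is precisely the ``standard selection'' you allude to, and it cleanly handles the subsequence issue built into Lemma~4.1 that your last paragraph glosses over.
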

\textbf{Remark.} Theorem 4.2 gives the $L^2$-concentration and rate of $L^2$-concentration of blow-up solutions of \eqref{e}. Indeed, we can choose $a(t)=\frac{1}{{\|\nabla u(t)\|}^{1-\delta}_{L^2}}$ with $0<\delta<1$. It is obvious that $\lim_{t\rightarrow T^*} a(t)=0$ and $a(t)$ satisfies the assumption in Theorem 4.2. Applying Theorem 4.2, if $u$ is a blow-up solution of \eqref{e} and $T^*$ its blow-up time, then for every $r>0$, there exists a function $x(t)\in\mathbb{R}^N$ such that
\[
\liminf_{t\rightarrow T^*}\int_{|x-x(t)|\leq r}|u(t,x)|^2dx\geq\int_{\mathbb{R}^N}|Q(x)|^2dx.
\]
Meanwhile, it follows from the choice of $a(t)$ that for any function $a(t)\leq \frac{1}{{\|\nabla u(t)\|}^{1-\delta}_{L^2}}$, \eqref{41} holds, which implies that the rate of $L^2$-concentration of blow-up solutions of \eqref{e} is $\frac{1}{{\|\nabla u(t)\|}^{1-\delta}_{L^2}}$ with $0<\delta<1$.

\begin{proof}
 Set
\[
\rho(t)=\|\nabla Q\|_{L^2}/\|\nabla u(t)\|_{L^2}~~and~~v(t,x)=\rho^{\frac{N}{2}}(t)u(t,\rho(t) x).
\]
Let $\{t_n\}_{n=1}^\infty$ be an any time sequence such that $t_n\rightarrow T^*$, $\rho_n:=\rho(t_n)$ and $v_n(x):=v(t_n,x)$.
Then, the sequence $\{v_n\}$
satisfies
\begin{equation}\label{44}
\|v_n\|_{L^2}=\|u(t_n)\|_{L^2}=\|u_0\|_{L^2},~~\|\nabla v_n\|_{L^2}=\rho_n\|\nabla u(t_n)\|_{L^2}=\|\nabla Q\|_{L^2}.
\end{equation}
Observe that
\begin{align}\label{45}
 H(v_n):=&\frac{1}{2}\int_{\mathbb{R}^N} |\nabla v_n(x)|^2dx-\frac{1}{p_1+2}\int_{\mathbb{R}^N}  |v_n(x)|^{p_1+2}dx
\nonumber\\
  =&\rho_n^2\left(\frac{1}{2}\int_{\mathbb{R}^N} |\nabla u(t_n,x)|^2dx-\frac{1}{p_1+2}\int_{\mathbb{R}^N}  |u(t_n,x)|^{p_1+2}dx\right)\nonumber\\
  =&\rho_n^2\left(E(u_0)-\frac{1}{p_2+2}\int_{\mathbb{R}^N} |u(t_n,x)|^{p_2+2}dx\right).
\end{align}
Thus, applying the following Gagliardo-Nirenberg inequality
\begin{equation*}
\int_{\mathbb{R}^N} |u(x)|^{p_2+2}dx\leq C\|u\|_{L^2}^{p_2+2-\frac{Np_2}{2}}\|\nabla u\|_{L^2}^{\frac{Np_2}{2}},~~for~0<p_2<\frac{4}{N},
\end{equation*}
we deduce that
\begin{align}\label{xx45}
 |H(v_n)|&\leq\rho_n^2\left(|E(u_0)|+\frac{1}{p_2+2}\int_{\mathbb{R}^N} |u(t_n,x)|^{p_2+2}dx\right)\nonumber\\&\leq \frac{|E(u_0)|\|\nabla Q\|_{L^2}^2}{\|\nabla u(t_n)\|_{L^2}^2}+C\frac{\|\nabla Q\|_{L^2}^2\|\nabla u(t_n)\|_{L^2}^{\frac{Np_2}{2}}}{\|\nabla u(t_n)\|_{L^2}^2}\nonumber\\&
 \rightarrow 0~~as~n\rightarrow\infty,
\end{align}
 which implies
$\int_{\mathbb{R}^N} |v_n(x)|^{p_1+2}dx\rightarrow (2/N+1)\|\nabla Q\|_{L^2}^2$.

Set $m^{p_1+2}=(2/N+1)\|\nabla Q\|_{L^2}^2$ and $M=\|\nabla Q\|_{L^2}$. Then it follows from Lemma 4.1 that there exist $V\in H^1$ and $\{x_n\}_{n=1}^\infty \subset \mathbb{R}^N$ such that, up to a subsequence,
\begin{equation}\label{46'}
v_n(\cdot +x_n)=\rho_n^{N/2}u(t_n,\rho_n(\cdot + x_n))\rightharpoonup V~~weakly~in~H^1
\end{equation}
with
\begin{equation}\label{46}
\|V\|_{L^2}\geq \|Q\|_{L^2}.
\end{equation}
Note that
\[
\frac{a(t_n)}{\rho_n}=\frac{a(t_n)\|\nabla u(t_n)\|_{L^2}}{\|\nabla Q\|_{L^2}}\rightarrow \infty,~~as~n\rightarrow\infty.
\]
Then for every $r>0$, there exists $n_0>0$ such that for every $n>n_0$, $r\rho_n<a(t_n)$. Therefore, using
\eqref{46'}, we obtain
\begin{align*}\label{45}
\liminf_{n\rightarrow \infty}\sup_{y\in \mathbb{R}^N}\int_{|x-y|\leq a(t_n)}|u(t_n,x)|^2dx&\geq \liminf_{n\rightarrow \infty}\sup_{y\in \mathbb{R}^N}\int_{|x-y|\leq r\rho_n}|u(t_n,x)|^2dx
\nonumber\\
  &\geq \liminf_{n\rightarrow \infty}\int_{|x-x_n|\leq r\rho_n}|u(t_n,x)|^2dx\nonumber\\
  &=\liminf_{n\rightarrow \infty}\int_{|x|\leq r}\rho_n^{N}|u(t_n,\rho_n(x+ x_n))|^2dx\nonumber\\
  &=\liminf_{n\rightarrow \infty}\int_{|x|\leq r}|v(t_n,x+ x_n)|^2dx\nonumber\\
  &\geq\liminf_{n\rightarrow \infty}\int_{|x|\leq r}|V(x)|^2dx,~~for~every~r>0,
\end{align*}
which means that
\[
\liminf_{n\rightarrow \infty}\sup_{y\in \mathbb{R}^N}\int_{|x-y|\leq a(t_n)}|u(t_n,x)|^2dx\geq\int_{ \mathbb{R}^N}|V(x)|^2dx.
\]
%
Since the sequence $\{t_n\}_{n=1}^\infty$ is arbitrary, we obtain
\begin{equation}\label{45x}
\liminf_{t\rightarrow T^*}\sup_{y\in \mathbb{R}^N}\int_{|x-y|\leq a(t)}|u(t,x)|^2dx\geq\int_{ \mathbb{R}^N}|Q(x)|^2dx.
\end{equation}
Observe that for every $t\in [0,T^*)$, the function $g(y):= \int_{|x-y|\leq a (t)}|u(t,x)|^2dx$ is continuous on $y\in \mathbb{R}^N$ and $g(y)\rightarrow 0$ as $|y|\rightarrow \infty$. So there exists a function $x(t)\in \mathbb{R}^N$ such that for every $t\in [0,T^*)$
\begin{equation*}
 \sup_{y\in \mathbb{R}^N}\int_{|x-y|\leq a (t)}|u(t,x)|^2dx=\int_{|x-x(t)|\leq a(t)}|u(t,x)|^2dx.
\end{equation*}
This and \eqref{45x} yield \eqref{41}.
\end{proof}
In the following theorem, we study the limiting profile of blow-up solutions of \eqref{e}.
\begin{theorem}
Let $u_0\in H^1$, $\lambda_1=-1$, $\lambda_2=1$, $p_1=\frac{4}{N}$, and $0<p_2<\frac{4}{N}$.
Assume $\|u_0\|_{L^2}=\|Q\|_{L^2}$, and the corresponding solution $u$ of \eqref{e} blows up in finite time $T^*>0$, then there exist $x(t)\in \mathbb{R}^N$ and $\theta(t)\in [0,2\pi)$ such that
\begin{equation}\label{43'}
\rho^{N/2}(t)u(t,\rho(t)(\cdot+x(t)))e^{i\theta(t)}\rightarrow Q~strongly~in~H^1,~as~t\rightarrow T^*,
\end{equation}
where $\rho(t)=\frac{\|\nabla Q\|_{L^2}}{\|\nabla u(t)\|_{L^2}}$.


\end{theorem}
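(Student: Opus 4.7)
The overall plan is to mimic the proof of Theorem 4.2 but exploit the critical mass equality $\|u_0\|_{L^2}=\|Q\|_{L^2}$ to upgrade the weak $L^2$ bound in Lemma 4.1 to an $H^1$ identification of the profile $V$ with a translated, phase-rotated ground state. Concretely, I would again introduce the rescaling
\[
v(t,x)=\rho(t)^{N/2}u(t,\rho(t)x),\qquad \rho(t)=\|\nabla Q\|_{L^2}/\|\nabla u(t)\|_{L^2},
\]
so that $\|v(t)\|_{L^2}=\|u_0\|_{L^2}=\|Q\|_{L^2}$ and $\|\nabla v(t)\|_{L^2}=\|\nabla Q\|_{L^2}$. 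For an arbitrary sequence $t_n\to T^*$, write $v_n:=v(t_n,\cdot)$. Repeating the computation (4.3)--(4.4) from the proof of Theorem 4.2, the Hamiltonian $H(v_n)$ of the mass-critical part vanishes in the limit because the $L^{p_2+2}$-term, now $L^2$-subcritical, is controlled by $\|\nabla u(t_n)\|_{L^2}^{Np_2/2}$ with $Np_2/2<2$; hence $\int|v_n|^{p_1+2}\,dx\to(2/N+1)\|\nabla Q\|_{L^2}^2$. Lemma 4.1 then produces $V\in H^1$ and $\{x_n\}\subset\mathbb{R}^N$ with $v_n(\cdot+x_n)\rightharpoonup V$ weakly in $H^1$ and $\|V\|_{L^2}\geq\|Q\|_{L^2}$.

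Next I would identify $V$. Weak $L^2$-convergence and conservation of mass give $\|V\|_{L^2}\leq\|Q\|_{L^2}$, so in fact $\|V\|_{L^2}=\|Q\|_{L^2}$, and the weak $L^2$-convergence automatically becomes strong $L^2$-convergence. Combining strong $L^2$-convergence with the uniform $H^1$-bound and interpolation (valid since $p_1+2<2^*$) yields $v_n(\cdot+x_n)\to V$ strongly in $L^{p_1+2}$, so $\|V\|_{L^{p_1+2}}^{p_1+2}=(2/N+1)\|\nabla Q\|_{L^2}^2$. Weak lower semicontinuity of $\|\nabla\cdot\|_{L^2}$ gives $\|\nabla V\|_{L^2}\leq\|\nabla Q\|_{L^2}$. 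Plugging these three equalities/inequalities into the sharp Gagliardo--Nirenberg inequality \eqref{gn} forces equality throughout, so $V$ is an extremizer and $\|\nabla V\|_{L^2}=\|\nabla Q\|_{L^2}$. The classification of extremizers of \eqref{gn} (equality characterization due to Weinstein) combined with the equalities $\|V\|_{L^2}=\|Q\|_{L^2}$ and $\|\nabla V\|_{L^2}=\|\nabla Q\|_{L^2}$ fixes the dilation and $L^2$-amplitude, leaving $V(x)=e^{i\theta_0}Q(x-x_0)$ for some $\theta_0\in[0,2\pi)$ and $x_0\in\mathbb{R}^N$.

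Now I upgrade to strong $H^1$-convergence. Since $\|\nabla v_n(\cdot+x_n)\|_{L^2}=\|\nabla Q\|_{L^2}=\|\nabla V\|_{L^2}$ for all $n$ while $\nabla v_n(\cdot+x_n)\rightharpoonup\nabla V$ weakly in $L^2$, the Hilbert-space identity $\|f_n-f\|^2=\|f_n\|^2-2\operatorname{Re}\langle f_n,f\rangle+\|f\|^2$ gives $\nabla v_n(\cdot+x_n)\to\nabla V$ strongly in $L^2$. Together with the already-obtained strong $L^2$-convergence, this means $v_n(\cdot+x_n)\to e^{i\theta_0}Q(\cdot-x_0)$ strongly in $H^1$. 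Absorbing the translation $x_0$ into the sequence (replace $x_n$ by $x_n+x_0$) and writing $\theta_n=-\theta_0$, we obtain, along a subsequence,
\[
\rho_n^{N/2}u(t_n,\rho_n(\cdot+x_n))e^{i\theta_n}\longrightarrow Q\quad\text{strongly in }H^1.
\]

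To conclude with continuous selectors $x(t),\theta(t)$, I argue by contradiction: if \eqref{43'} failed then there would exist $\varepsilon>0$ and $t_n\to T^*$ with $\inf_{y,\phi}\|\rho_n^{N/2}u(t_n,\rho_n(\cdot+y))e^{i\phi}-Q\|_{H^1}>\varepsilon$ for every $n$, contradicting the subsequential convergence just established. Defining $x(t),\theta(t)$ to nearly achieve the infimum of $\|\rho(t)^{N/2}u(t,\rho(t)(\cdot+y))e^{i\phi}-Q\|_{H^1}$ then yields the claim. The main technical obstacle is the identification step: showing that the weak limit $V$ is forced to coincide with $Q$ up to the symmetries of \eqref{ell}, which requires saturation of Gagliardo--Nirenberg together with careful handling of the $L^2$-subcritical perturbation $\lambda_2|u|^{p_2}u$, whose contribution to the Hamiltonian is absorbed by the Gagliardo--Nirenberg estimate with exponent $Np_2/2<2$ exactly as in \eqref{xx45}.
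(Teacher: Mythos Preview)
Your proposal is correct and follows essentially the same route as the paper: rescale to $v_n$, use $H(v_n)\to 0$ via the subcritical Gagliardo--Nirenberg control of the $|u|^{p_2+2}$ term, apply Lemma~4.1, exploit the critical-mass assumption to force $\|V\|_{L^2}=\|Q\|_{L^2}$ and hence strong $L^2$ (then $L^{p_1+2}$) convergence, use sharp Gagliardo--Nirenberg together with weak lower semicontinuity to pin down $\|\nabla V\|_{L^2}=\|\nabla Q\|_{L^2}$, identify $V$ as a translated and phase-rotated $Q$ via the variational/extremizer characterization, and pass from subsequences to the full limit. The only cosmetic difference is the order in which you identify $V$ and upgrade to strong $H^1$, and your contradiction argument for the selectors $x(t),\theta(t)$ is a slightly more explicit version of the paper's ``since $\{t_n\}$ is arbitrary'' step.
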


\begin{proof}

We use the notations in the proof of Theorem 4.2. Assume that
$\|u_0\|_{L^2}=\|Q\|_{L^2}$. Recall that we have verified that
$\|V\|_{L^2}\geq \|Q\|_{L^2}$ in the proof of Theorem 4.2. Whence
\[
\|Q\|_{L^2}\leq\|V\|_{L^2}\leq  \liminf_{n\rightarrow \infty }\|v_n\|_{L^2}=\liminf_{n\rightarrow \infty }\|u(t_n)\|_{L^2}=\|u_0\|_{L^2}=\|Q\|_{L^2},
\]
and then,
\begin{equation}\label{49'}
\lim_{n\rightarrow \infty }\|v_n\|_{L^2}=\|V\|_{L^2}=\|Q\|_{L^2},
\end{equation}
which implies
\[
v_n(\cdot+x_n)\rightarrow V~ strongly ~in ~L^2~as~n\rightarrow \infty.
\]
We infer from the inequality \eqref{gn} that
\[
\|v_n(\cdot+x_{n})-V\|^{p_1+2}_{L^{p_1+2}}\leq C\|v_n(\cdot+x_{n})-V\|^{p_1}_{L^2}\|\nabla (v_n(\cdot+x_{n})
-V)\|^2_{L^2}.
\]
From $\|\nabla v_n(\cdot+x_{n})\|_{L^2}\leq C$, we get
\[
v_n(\cdot+x_{n})\rightarrow V~~in ~L^{p_1+2}~as~n\rightarrow \infty.
\]

Next, we will prove that $v_n(\cdot+x_{n})$ converges to $V$ strongly in $H^1$. For this aim, we estimate as follows:
\begin{align}\label{410}
0 =&\lim_{n\rightarrow \infty }H(v_n)
\nonumber\\
  =&\frac{1}{2}\int_{\mathbb{R}^N}|\nabla Q(x)|^2dx-\frac{1}{p_1+2}\lim_{n\rightarrow \infty}\int_{\mathbb{R}^N} |v_n(x)|^{p_1+2}dx\nonumber\\
  =&\frac{1}{2}\int_{\mathbb{R}^N}|\nabla Q(x)|^2dx-\frac{1}{p_1+2}\int_{\mathbb{R}^N}|V(x)|^{p_1+2}dx.
\end{align}
Thus, we infer from the inequality \eqref{gn} that
\begin{equation}\label{411h}
\frac{1}{2}\int_{\mathbb{R}^N}|\nabla Q(x)|^2dx=\frac{1}{p_1+2}\int_{\mathbb{R}^N}|V(x)|^{p_1+2}dx\leq\frac{1}{2}\frac{\| V\|_{L^2}^{p_1}}{\|Q\|_{L^2}^{p_1}}\|\nabla V\|_{L^2}^2=\frac{1}{2}\|\nabla V\|_{L^2}^2.
\end{equation}
On the other hand, we deduce from \eqref{44} that $ \|\nabla V\|_{L^2}\leq \liminf_{n\rightarrow \infty}\|\nabla v_n(\cdot+x_{n})\|_{L^2}=\|\nabla Q\|_{L^2}$. Hence, we have $\| Q\|_{H^1}=\|V\|_{H^1}$ and
\begin{equation}\label{411}
v_n(\cdot+x_{n})\rightarrow V~ strongly ~in ~H^1~as~n\rightarrow \infty.
\end{equation}
This and \eqref{411h} imply that
\[
H(V)=\frac{1}{2}\int_{\mathbb{R}^N}|\nabla V(x)|^2dx-\frac{1}{p_1+2}\int_{\mathbb{R}^N}|V(x)|^{p_1+2}dx=0.
\]
Up to now, we have verified that
\[
\|V\|_{L^2}=\|Q\|_{L^2},~\|\nabla V\|_{L^2}=\|\nabla Q\|_{L^2}~ and ~H(V)=0.
\]
The variational characterization of the ground state implies that
\[
V(x)=e^{i\theta}Q(x+x_0)~for~some~\theta\in [0,2\pi),~x_0\in \mathbb{R}^N
\]
and
\[
\rho^{N/2}_nu(t_n,\rho_n(\cdot+x_{0}))\rightarrow e^{i\theta}Q(\cdot+x_0)~strongly~in~H^1~as~n\rightarrow \infty.
\]
Since the sequence $\{t_n\}_{n=1}^\infty$ is arbitrary, we infer that there are two functions $x(t)\in \mathbb{R}^N$ and $\theta(t)\in [0,2\pi)$ such that
\[
\rho^{N/2}(t)e^{i\theta(t)}u(t,\rho(t)(x+x(t)))\rightarrow Q~strongly~in~H^1~as~t\rightarrow T^*.
\]
\end{proof}

In the following theorem, we will prove that the blow-up solution $|u(t,x)|^2$ like a $\delta$-function as $t\rightarrow T^*$ at the
point $x = x_0$, which implies that the point $x_0$ concentrates all mass of blow-up
solution of \eqref{e}.

\begin{theorem}\label{THlimit}
Let $u_0 \in \Sigma$, $\lambda_1=-1$, $\lambda_2=1$, $p_1=\frac{4}{N}$, and $0<p_2<\frac{4}{N}$. If the solution $u$ of \eqref{e} blows up in finite time $T^*>0$ and $\|u_0\|_{L^2}=\|Q\|_{L^2}$, then there exists $x_0\in \mathbb{R}^N$ such that
 \begin{equation}\label{71}
 |u(t,x)|^2\rightarrow \|Q\|_{L^2}^2\delta_{x_0}
 \end{equation}
 in the sense of distribution as $t\rightarrow T^*$.
\end{theorem}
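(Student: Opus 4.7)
The plan is to combine the strong $H^1$ convergence from Theorem 4.3 with the $\Sigma$-regularity to localize the mass of $|u(t)|^2$ at a single point, and then to identify that point via conservation of momentum. First, from Theorem 4.3 we have $\rho^{N/2}(t)u(t,\rho(t)(\cdot+x(t)))e^{i\theta(t)}\to Q$ strongly in $H^1$, with $\rho(t)\to 0$. Setting $y(t):=\rho(t)x(t)$ and performing the change of variables $z=\rho(t)(\cdot+x(t))$, the $L^2$-part of this convergence translates to
\[
\int_{|z-y(t)|\leq r\rho(t)}|u(t,z)|^2\,dz\longrightarrow\int_{|y|\leq r}|Q(y)|^2\,dy,\qquad t\to T^*,
\]
for every $r>0$. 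Combined with mass conservation, this says that asymptotically all the mass of $|u(t,\cdot)|^2$ is supported in a ball of radius $O(\rho(t))\to 0$ around $y(t)$.

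Second, I would show that $J(t):=\|xu(t)\|_{L^2}^2$ is bounded on $[0,T^*)$ and, as a consequence, that $y(t)$ stays bounded. Lemma 2.2 together with conservation of energy and the specialization $\lambda_1=-1$, $\lambda_2=1$, $p_1=4/N$ produces
\[
J''(t)=16E(u_0)+\frac{4Np_2-16}{p_2+2}\int_{\mathbb{R}^N}|u(t,x)|^{p_2+2}dx,
\]
and $p_2<4/N$ makes the second coefficient negative, so $J''(t)\leq 16E(u_0)$; integrating twice gives $J(t)\leq J(0)+|J'(0)|T^*+8|E(u_0)|(T^*)^2$ on $[0,T^*]$. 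To bound $y(t)$, I would pick $R$ so that $\int_{|y|\leq R}|Q|^2\geq\tfrac{1}{2}\|Q\|_{L^2}^2$; then on $\{|x-y(t)|\leq R\rho(t)\}$ one has $|x|\geq|y(t)|-R\rho(t)$, whence
\[
J(t)\geq\tfrac{1}{2}\|Q\|_{L^2}^2\bigl(|y(t)|-R\rho(t)\bigr)_+^{2},
\]
and boundedness of $J$ with $\rho(t)\to 0$ forces $|y(t)|$ bounded.

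Third, to identify $\lim_{t\to T^*}y(t)$ I would invoke momentum conservation: the nonlinearity is gauge invariant, so $P:=\operatorname{Im}\int\bar u\nabla u\,dx$ is constant in $t$, hence $\frac{d}{dt}\int x|u(t,x)|^2dx=2P(0)$ and the center of mass $x_c(t):=\|u_0\|_{L^2}^{-2}\int x|u(t,x)|^2dx$ is affine in $t$; in particular $x_0:=\lim_{t\to T^*}x_c(t)$ exists. Splitting $\int x|u(t,x)|^2dx$ into the concentration region (where it equals $y(t)\|Q\|_{L^2}^2+O(\rho(t))$) and its complement (controlled by Cauchy--Schwarz with $\|xu(t)\|_{L^2}$ bounded and the complementary mass small) gives $y(t)\to x_0$. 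Then, for any $\varphi\in C_c^\infty(\mathbb{R}^N)$, the same splitting with $\varphi$ in place of $x$ together with continuity of $\varphi$ at $x_0$ yields $\int\varphi(x)|u(t,x)|^2dx\to\varphi(x_0)\|Q\|_{L^2}^2$, which is exactly \eqref{71}. The main obstacle is precisely this last step of going from concentration at the moving point $y(t)$ to convergence at a fixed point; this is where the $\Sigma$ hypothesis enters twice, both to control the Cauchy--Schwarz tail through the bound on $J(t)$ and to pin down $x_0$ through the affine evolution of the center of mass.
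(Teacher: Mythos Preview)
Your argument is correct and follows essentially the same skeleton as the paper---concentration at a moving point, boundedness of the variance $J(t)$, existence of the limiting center of mass $x_0$, and then identification $y(t)\to x_0$ via a splitting controlled by Cauchy--Schwarz and the small residual mass---but you make two genuinely different choices.

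First, you start from the strong $H^1$ profile of Theorem 4.3 rather than from the $L^2$-concentration of Theorem 4.2; the paper uses only the latter together with mass conservation to obtain $|u(t,\cdot+x(t))|^2\to\|Q\|_{L^2}^2\delta_0$. Either route gives the same concentration statement, so this is a matter of taste.

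Second, and more substantively, you identify $x_0$ via \emph{momentum conservation}: since the nonlinearity is of the form $g(|u|^2)u$ with $g$ real, $P=\operatorname{Im}\int\bar u\,\nabla u\,dx$ is conserved, so $\tfrac{d}{dt}\int x|u|^2\,dx=2P$ is constant and the center of mass is affine in $t$, hence has a limit at $T^*$. The paper instead proves, from the critical-mass variational inequality
\[
\Bigl|\operatorname{Im}\int \bar u\,\nabla u\cdot\nabla\theta\,dx\Bigr|\le\Bigl(2H(u)\int|u|^2|\nabla\theta|^2\,dx\Bigr)^{1/2},
\]
that $\bigl|\tfrac{d}{dt}\int x_j|u|^2\,dx\bigr|\le C$, and then uses a Cauchy-sequence argument to obtain the limit. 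Your route is shorter and more elementary for this theorem; the paper's inequality, however, is not wasted---it is precisely the tool reused in Theorem~\ref{THrate} (with $\theta=h_A(x-x_0)$) to get the lower bound on the blow-up rate, which momentum conservation alone would not yield. You also supply explicitly the bound on $J(t)$ from $J''\le 16E(u_0)$ on the finite interval $[0,T^*)$, which the paper invokes without detail.
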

\begin{proof}
According to Theorem 4.2, it follows that for all $r>0$
\begin{equation}
\liminf_{t\rightarrow T^*} \int_{|x-x(t)|<r} |u(t, x)|^2dx\geq \|Q\|_{L^2}^2.
\end{equation}
This, together with the conservation of mass $\|u(t)\|_{L^2}^2=\|u_0\|_{L^2}^2=\|Q\|_{L^2}^2$, implies that
 for all $r>0$
\begin{equation*}
\liminf_{t\rightarrow T^*} \int_{|x-x(t)|<r} |u(t, x)|^2dx= \|Q\|_{L^2}^2.
\end{equation*}
This yields
 \begin{equation}\label{73}
 |u(t,x+x(t))|^2\rightarrow \|Q\|_{L^2}^2\delta_{x=0}.
 \end{equation}

By using the inequality \eqref{gn}, for any $\varepsilon >0$ and any real-valued function $\theta$ defined on $\mathbb{R}^N$, we have
\begin{align*}
H(e^{\pm i\epsilon \theta}u) &\geq \frac{1}{2}\int_{\mathbb{R}^N} | \nabla (e^{\pm i\epsilon \theta}u)|^2dx\left( 1-\frac{\|u\|_{L^2}^{p_1}}{\|Q\|_{L^2}^{p_1}} \right)
= 0.
\end{align*}
Therefore,
\begin{equation*}
0\leq H(e^{\pm i\epsilon \theta}u)=\frac{\epsilon^2}{2}\int_{\mathbb{R}^N} |u|^2|\nabla \theta|^2dx\mp \epsilon Im \int_{\mathbb{R}^N} \bar{u}\nabla u\cdot \nabla \theta dx +H(u),
 \end{equation*}
which implies that
\begin{equation}\label{72}
\left|\mp Im \int_{\mathbb{R}^N} \bar{u}\nabla u\cdot \nabla \theta dx\right|\leq \left(2H(u)\int_{\mathbb{R}^N} |u|^2|\nabla \theta|^2dx\right)^{1/2}.
 \end{equation}
For any $j=1,2,\ldots,N$, it follows from \eqref{72} and $H(u(t))\leq E(u(t))=E(u_0)$ that
\begin{align*}
\left| \frac{d}{dt}\int_{\mathbb{R}^N} |u(t,x)|^2x_jdx\right| &= 2\left| \int_{\mathbb{R}^N} \bar{u}\partial_ju dx\right|
\nonumber
\\
& = 2\left| \int_{\mathbb{R}^N} \bar{u}\nabla u \nabla x_j dx\right|\nonumber
\\
& \leq 2\left(2H(u)\int_{\mathbb{R}^N} |u|^2|\nabla x_j|^2dx\right)^{1/2}\nonumber
\\
& \leq C.
\end{align*}
Let $t_m,t_k\in (0,T^*)$ be any two sequences satisfying $\lim_{m\rightarrow\infty}t_m=\lim_{k\rightarrow\infty}t_k=T^*$. Then for any $j=1,2,\ldots,N$, we have
\begin{equation*}
\left| \int_{\mathbb{R}^N} |u(t_m,x)|^2x_jdx- \int_{\mathbb{R}^N} |u(t_k,x)|^2x_jdx\right|\leq C|t_m-t_k|\rightarrow0~~as~~m,k\rightarrow \infty,
 \end{equation*}
which implies that
\begin{equation*}
\lim _{t\rightarrow T^*}\int_{\mathbb{R}^N} |u(t,x)|^2x_jdx~~exists~~for~any~j=1,2,\ldots,N.
 \end{equation*}
Set
\begin{equation*}
x_0=\|Q\|_{L^2}^{-2}\lim _{t\rightarrow T^*}\int_{\mathbb{R}^N} |u(t,x)|^2xdx.
\end{equation*}
Then
\begin{equation}\label{78}
\lim _{t\rightarrow T^*}\int_{\mathbb{R}^N} |u(t,x)|^2xdx=x_0\|Q\|_{L^2}^2.
 \end{equation}
On the other hand, we infer from Lemma 2.2 that there is a constant $c_0$ such that
\begin{equation*}
\int_{\mathbb{R}^N} |x|^2 |u(t,x)|^2dx\leq c_0.
\end{equation*}
This yields
\begin{align}\label{76}
\int_{\mathbb{R}^N} |x|^2|u(t,x+x(t))|^2dx &\leq 2\int_{\mathbb{R}^N} |x+x(t)|^2|u(t,x+x(t))|^2dx+2|x(t)|^2\int_{\mathbb{R}^N} |u(t,x+x(t))|^2dx
\nonumber
\\
& \leq 2c_0+2|x(t)|^2\|u_0\|_{L^2}^2.
\end{align}
We infer from \eqref{73}
that
\begin{equation*}
\limsup_{t\rightarrow T^*}|x(t)|^2\|Q\|_{L^2}^2=\limsup_{t\rightarrow T^*}\int_{|x|<1}|x+x(t)|^2|u(t,x+x(t))|^2dx\leq \int_{\mathbb{R}^N} |x|^2|u(t,x)|^2dx\leq c_0.
\end{equation*}
Thus,
\begin{equation}\label{75}
\limsup_{t\rightarrow T^*}|x(t)|\leq \frac{\sqrt{c_0}}{\|Q\|_{L^2}}.
\end{equation}
Combining \eqref{76} and \eqref{75}, we obtain
\begin{equation*}
\limsup_{t\rightarrow T^*}\int_{\mathbb{R}^N} |x|^2|u(t,x+x(t))|^2dx\leq C.
\end{equation*}
Hence, for any $\varepsilon >0$, there exists $R_0=R_0(\varepsilon)$ such that
\begin{equation*}
\limsup_{t\rightarrow T^*}\left|\int_{|x|\geq R_0} x|u(t,x+x(t))|^2dx\right|\leq \frac{C}{R_0}<\varepsilon.
\end{equation*}
It follows from \eqref{73} that
\begin{align}\label{77}
\limsup_{t\rightarrow T^*}\left |\int_{\mathbb{R}^N} |u(t,x)|^2xdx-x(t)\|Q\|_{L^2}^2\right| &= \limsup_{t\rightarrow T^*}\left |\int_{\mathbb{R}^N} |u(t,x)|^2(x-x(t))dx\right|
\nonumber
\\
& \leq \limsup_{t\rightarrow T^*}\left |\int_{|x|\leq R_0} |u(t,x+x(t))|^2xdx\right|+\varepsilon\nonumber
\\
& \leq \varepsilon,
\end{align}
which, together with \eqref{78} implies that $\lim_{t\rightarrow T^*}x(t)=x_0$. Therefore,
\begin{equation*}\label{79}
\limsup_{t\rightarrow T^*}\int_{\mathbb{R}^N} |u(t,x)|^2xdx=\|Q\|_{L^2}^2x_0,
\end{equation*}
and
\begin{equation*}\label{710}
 |u(t,x)|^2\rightarrow \|Q\|_{L^2}^2\delta_{x=x_0} ~in~the~sence~of~distribution~as~t\rightarrow T^*.
 \end{equation*}
\end{proof}
The following theorem gives the lower bound for the blow-up rate of blow-up solutions with critical mass $\|u_0\|_{L^2}=\|Q\|_{L^2}$.
\begin{theorem}\label{THrate}
Let $u_0 \in \Sigma$, $\lambda_1=-1$, $\lambda_2=1$, $p_1=\frac{4}{N}$, and $0<p_2<\frac{4}{N}$. If the solution $u$ of \eqref{e} blows up in finite time $T^*>0$ and $\|u_0\|_{L^2}=\|Q\|_{L^2}$, then there exists a constant $C>0$ such that
 \begin{equation}\label{81}
\|\nabla u(t)\|_{L^2}\geq \frac{C}{T^*-t},~~\forall t\in [0,T^*).
 \end{equation}
\end{theorem}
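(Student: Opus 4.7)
My plan is to adapt the argument of Merle (refined by Hmidi--Keraani) for the $L^2$-critical NLS to our equation \eqref{e}, exploiting the distributional concentration proved in Theorem \ref{THlimit} and the sharp variational inequality \eqref{72} derived in its proof. The idea is to localize mass around the concentration point $x_0$ at a free scale $A>0$, bound the time derivative of this localized mass by $C/A$ uniformly in $t$, then play this against an upper bound coming from H\"older and the Gagliardo--Nirenberg inequality \eqref{gn}; choosing $A\sim T^*-t$ will then give $\|\nabla u(t)\|_{L^2}\gtrsim 1/(T^*-t)$. As a preliminary I note that $H(u(t)):=\frac{1}{2}\|\nabla u(t)\|_{L^2}^2-\frac{1}{p_1+2}\|u(t)\|_{L^{p_1+2}}^{p_1+2}$ is uniformly bounded on $[0,T^*)$: \eqref{gn} combined with $\|u(t)\|_{L^2}=\|Q\|_{L^2}$ forces $H(u(t))\geq 0$, while conservation of energy gives $H(u(t))=E(u_0)-\frac{1}{p_2+2}\|u(t)\|_{L^{p_2+2}}^{p_2+2}\leq E(u_0)$, so $0\leq H(u(t))\leq E(u_0)$ throughout $[0,T^*)$.

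Fix $\phi\in C_c^\infty(\mathbb{R}^N)$ radial with $\phi\equiv 1$ on $\{|x|\leq 1\}$ and $\phi\equiv 0$ on $\{|x|\geq 2\}$, and for $A>0$ set $\phi_A(x)=\phi(x/A)$ and $G_A(t)=\int_{\mathbb{R}^N}\phi_A(x-x_0)|u(t,x)|^2\,dx$. A direct computation gives $G_A'(t)=2Im\int \bar u\,\nabla u\cdot\nabla\phi_A(x-x_0)\,dx$; applying \eqref{72} with $\theta(x)=\phi_A(x-x_0)$ (its derivation in Theorem \ref{THlimit} uses only $\|u(t)\|_{L^2}=\|Q\|_{L^2}$) together with $H(u(t))\leq E(u_0)$ yields
\[
|G_A'(t)|\leq \frac{C_0}{A},\qquad C_0:=2\sqrt{2E(u_0)}\,\|\nabla\phi\|_{L^\infty}\|Q\|_{L^2}.
\]
Since $|u(t,\cdot)|^2\to\|Q\|_{L^2}^2\delta_{x_0}$ in $\mathcal{D}'(\mathbb{R}^N)$ by Theorem \ref{THlimit} and $\phi_A(0)=1$, it follows that $\lim_{t\to T^*}G_A(t)=\|Q\|_{L^2}^2$, and integrating the derivative bound back from $T^*$ gives the key lower bound $G_A(t)\geq \|Q\|_{L^2}^2-C_0(T^*-t)/A$.

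For the matching upper bound, since $\phi_A(\cdot-x_0)$ is supported in $B_{2A}(x_0)$, H\"older's inequality combined with \eqref{gn} and $\|u(t)\|_{L^2}=\|Q\|_{L^2}$ (using $p_1=\frac{4}{N}$, so $p_1/(p_1+2)=2/(N+2)$ and $4/(p_1+2)=2N/(N+2)$) gives
\[
G_A(t)\leq |B_{2A}|^{p_1/(p_1+2)}\|u(t)\|_{L^{p_1+2}}^2\leq C_1\bigl(A\,\|\nabla u(t)\|_{L^2}\bigr)^{2N/(N+2)}.
\]
Choosing $A:=\|Q\|_{L^2}^2(T^*-t)/(2C_0)$, the lower bound becomes $G_A(t)\geq\|Q\|_{L^2}^2/2$, and the upper bound then forces $A\,\|\nabla u(t)\|_{L^2}\geq C_2>0$, i.e. $(T^*-t)\|\nabla u(t)\|_{L^2}\geq C$ for all $t$ sufficiently close to $T^*$. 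On any compact sub-interval of $[0,T^*)$ the conclusion is trivial since $\|\nabla u(t)\|_{L^2}$ is continuous and strictly positive (because $\|u(t)\|_{L^2}=\|Q\|_{L^2}>0$), so adjusting $C$ gives \eqref{81} on all of $[0,T^*)$.

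The main obstacle is obtaining the derivative bound $|G_A'(t)|\leq C_0/A$ with $C_0$ independent of $t$: a brute-force Cauchy--Schwarz yields only $|G_A'(t)|\leq C\|\nabla u(t)\|_{L^2}/A$, which is useless since $\|\nabla u(t)\|_{L^2}\to\infty$. It is precisely the critical-mass hypothesis that makes the sharp variational inequality \eqref{72} available and replaces $\|\nabla u(t)\|_{L^2}$ by the bounded factor $\sqrt{H(u(t))}\leq\sqrt{E(u_0)}$; the subcritical term $|u|^{p_2}u$ enters only through the harmless bound $H(u(t))\leq E(u_0)$, which is the only place where the present equation differs from the classical $L^2$-critical NLS in this argument.
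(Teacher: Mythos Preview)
Your proof is correct, but it follows a genuinely different route from the paper's. The paper chooses a localized \emph{variance-type} weight $h_A(x)=A^2h(x/A)$ with $h(x)=|x|^2$ near the origin and the pointwise property $|\nabla h|^2\leq Ch$; plugging this into \eqref{72} yields the nonlinear differential inequality $|g_A'(t)|\leq C\sqrt{g_A(t)}$ with $C$ independent of $A$, so $\sqrt{g_A}$ is uniformly Lipschitz. Since $h_A(0)=0$, Theorem~\ref{THlimit} gives $g_A(t)\to 0$ as $t\to T^*$, hence $g_A(t)\leq C(T^*-t)^2$; letting $A\to\infty$ recovers the full variance estimate $\int |x-x_0|^2|u(t,x)|^2\,dx\leq C(T^*-t)^2$, and the uncertainty principle then yields \eqref{81}. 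You instead use a flat cutoff with $\phi_A(0)=1$, so that Theorem~\ref{THlimit} gives $G_A(t)\to\|Q\|_{L^2}^2$, and you close by comparing with a H\"older/Gagliardo--Nirenberg upper bound at the optimized scale $A\sim T^*-t$, bypassing the uncertainty principle entirely. The paper's approach yields the stronger intermediate statement $\|(x-x_0)u(t)\|_{L^2}\leq C(T^*-t)$, which is of independent interest; your argument is more direct and does not need the special pointwise structure $|\nabla h|^2\leq Ch$. One trivial slip: to make $C_0(T^*-t)/A=\|Q\|_{L^2}^2/2$ you need $A=2C_0(T^*-t)/\|Q\|_{L^2}^2$, the reciprocal of what you wrote.
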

\begin{proof}
Let $h \in C_0^\infty (\mathbb{R}^N)$ be a nonnegative radial function such that
\[
h(x)=h(|x|)=|x|^2,~~if~|x|<1~~and~~|\nabla h(x)|^2\leq Ch(x).
\]
For $A>0$, we define $h_A(x)=A^2h(\frac{x}{A})$
and $g_A(t)=\int h_A(x-x_0)|u(t,x)|^2dx$ with $x_0$
defined by \eqref{79}.

From \eqref{72}, for every $t\in  [0,T^*)$, we have
\begin{align}\label{82}
\left |\frac{d}{dt}g_A(t)\right| &= 2\left |Im \sum_{j=1}^{N}\int_{\mathbb{R}^N} \bar{u}(t,x)\nabla u(t,x)\nabla h_A(x-x_0)dx\right|
\nonumber
\\
& \leq 2\sqrt{E(u_0)}\left (\int_{\mathbb{R}^N} |u(t,x)|^2|\nabla h_A(x-x_0)|^2dx\right)^{1/2}\nonumber
\\
& \leq C\sqrt{g_A(t)},
\end{align}
which implies
\begin{equation*}
\left |\frac{d}{dt}\sqrt{g_A(t)}\right|\leq C.
\end{equation*}
Integrating on both sides, we obtain
\begin{equation}\label{829}
\left |\sqrt{g_A(t)}-\sqrt{g_A(t_1)}\right|\leq C|t-t_1|.
\end{equation}
It follows from \eqref{71} that
\[
g_A(t_1)\rightarrow \|Q\|_{L^2}h_A(0)=0~~as~~t_1\rightarrow T^*.
\]
Therefore, letting $t_1\rightarrow T^*$ in \eqref{829}, it follows that
\[
g_A(t)\leq C(T^*-t)^2.
\]
Now fix $t \in [0,T^*)$ and let $A$ go to infinity, we have

\begin{equation*}
\int_{\mathbb{R}^N} |x-x_0|^2|u(t,x)|^2dx\leq C(T^*-t)^2.
\end{equation*}
Then the uncertainty principle
\begin{equation*}
\left(\int_{\mathbb{R}^N} |u(t,x)|^2dx\right)^2\leq \left(\int_{\mathbb{R}^N} |x-x_0|^2|u(t,x)|^2dx\right)\left(\int_{\mathbb{R}^N} |\nabla u(t,x)|^2dx\right),
\end{equation*}
implies a lower bound of the blow-up rate
\begin{equation*}
\|\nabla u(t)\|_{L^2}\geq \frac{C}{T^*-t},~~\forall t\in [0,T^*).
 \end{equation*}
\end{proof}

\section{The $L^2$-supercritical case}
When $\lambda_1<0$, $\lambda_2\in \mathbb{R}$, $\frac{4}{N}<p_1<\frac{4}{N-2}$ and $0<p_2<p_1$, for some large initial data, the solution may blow up in finite time.
In order to investigate some concentration properties of the blow-up solutions to \eqref{e} in this case, we need the following version of compactness lemma which comes from \cite{gq}.
\begin{lemma}
Let $\{u_n\}_{n=1}^{\infty}$ be a bounded sequence in $\dot{H}^{s_c}\cap \dot{H}^1$, such that
\begin{equation*}
\limsup_{n\rightarrow \infty}\|\nabla u_n\|_{L^2}\leq M,~~~\limsup_{n\rightarrow \infty}\|u_n\|_{L^{p_1+2}}\geq m>0.
\end{equation*}
Then, there exist $\{x^1_n\}_{n=1}^{\infty},\{x^2_n\}_{n=1}^{\infty}\subset \mathbb{R}^N$, $V_1\in\dot{H}^{s_c}\cap \dot{H}^1$, and $V_2\in L^{p_c} \cap \dot{H}^1$ such that, up to a subsequence,
\begin{equation*}
u_n(\cdot+x^1_n)\rightharpoonup V_1~~weakly~in~~\dot{H}^{s_c}\cap \dot{H}^1,
\end{equation*}
with
\begin{equation*}
\|V_1\|_{\dot{H}^{s_c}}^{p_1}\geq \frac{2m^{p_1+2}}{(p_1+2)M^2}\|Q\|_{\dot{H}^{s_c}}^{p_1},
\end{equation*}
and
\begin{equation*}
u_n(\cdot+x^2_n)\rightharpoonup V_2~~weakly~in~~L^{p_c} \cap \dot{H}^1,
\end{equation*}
with
\begin{equation*}
\|V_2\|_{L^{p_c}}^2\geq \frac{2m^{p_1+2}}{(p_1+2)M^2}\|R\|_{L^{p_c}}^2,
\end{equation*}
where $Q$ and $R$ are the solutions of the following elliptic equations
\begin{equation}\label{e1}
-\Delta Q +\frac{p_1}{2}(-\triangle)^{s_c}Q=|Q|^{p_1}Q,
\end{equation}
and
\begin{equation}\label{e2}
-\Delta R +|R|^{p_c-2}R =|R |^{p_1}R,
\end{equation}
respectively.
\end{lemma}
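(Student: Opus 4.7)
The plan is to adapt the Hmidi--Keraani profile decomposition argument to the $L^2$-supercritical setting following Guo \cite{gq}. The two assertions are proved in parallel: each one combines a profile decomposition of bounded sequences in an appropriate homogeneous function space with a sharp Gagliardo--Nirenberg inequality whose optimizer solves the corresponding elliptic equation \eqref{e1} or \eqref{e2}.

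The first step is to establish the sharp Gagliardo--Nirenberg inequalities
\begin{equation*}
\|u\|_{L^{p_1+2}}^{p_1+2} \leq \frac{p_1+2}{2}\,\frac{\|\nabla u\|_{L^2}^2\,\|u\|_{\dot H^{s_c}}^{p_1}}{\|Q\|_{\dot H^{s_c}}^{p_1}}, \qquad \|u\|_{L^{p_1+2}}^{p_1+2} \leq \frac{p_1+2}{2}\,\frac{\|\nabla u\|_{L^2}^2\,\|u\|_{L^{p_c}}^{p_1}}{\|R\|_{L^{p_c}}^{p_1}}.
\end{equation*}
Each follows from minimizing the scale-invariant Weinstein ratio on $\dot H^1 \cap \dot H^{s_c}\setminus\{0\}$ (respectively $\dot H^1 \cap L^{p_c}\setminus\{0\}$): the two-parameter dilation $u\mapsto \mu u(\lambda\cdot)$ allows one to normalize a minimizer, the Euler--Lagrange equation for the quotient is exactly \eqref{e1} (respectively \eqref{e2}), and Pohozaev identities for $Q$ and $R$ convert the optimal constant into the explicit form above.

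The second step is to apply the Gerard-type profile decomposition established by Guo \cite{gq} in these two spaces: up to extraction we may write
\begin{equation*}
u_n = \sum_{j=1}^{l} \phi^j(\cdot - x_n^j) + r_n^l, \qquad |x_n^j - x_n^k| \to \infty \text{ for } j\neq k,
\end{equation*}
with asymptotic Pythagorean identities for $\|\nabla\cdot\|_{L^2}^2$ and $\|\cdot\|_{\dot H^{s_c}}^2$ (respectively $\|\cdot\|_{L^{p_c}}^{p_c}$) and with $\limsup_{n\to\infty}\|r_n^l\|_{L^{p_1+2}} \to 0$ as $l\to\infty$. Combined with the pairwise divergence of the cores, this yields the asymptotic orthogonality
\begin{equation*}
\|u_n\|_{L^{p_1+2}}^{p_1+2} = \sum_j \|\phi^j\|_{L^{p_1+2}}^{p_1+2} + o_n(1) + o_l(1).
\end{equation*}
Applying the first sharp Gagliardo--Nirenberg inequality to each profile, summing, and using $\sum_j \|\nabla\phi^j\|_{L^2}^2 \leq M^2 + o_n(1)$, we obtain
\begin{equation*}
m^{p_1+2} \leq \liminf_{n\to\infty}\|u_n\|_{L^{p_1+2}}^{p_1+2} \leq \frac{p_1+2}{2}\,\frac{M^2}{\|Q\|_{\dot H^{s_c}}^{p_1}}\,\sup_j \|\phi^j\|_{\dot H^{s_c}}^{p_1}.
\end{equation*}
Choosing an index $j_0$ attaining the supremum and setting $V_1 := \phi^{j_0}$, $x_n^1 := x_n^{j_0}$ produces a profile satisfying the required lower bound and the weak convergence $u_n(\cdot + x_n^1) \rightharpoonup V_1$. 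The conclusion for $V_2$ is obtained in exactly the same way, using the $L^{p_c}\cap\dot H^1$ profile decomposition and the second sharp Gagliardo--Nirenberg inequality.

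The main obstacle is the variational first step: one must verify that the Weinstein-type minimization problems admit minimizers, identify these (up to scaling, translation and phase) with ground states of \eqref{e1} and \eqref{e2}, and read off the precise best constants from Pohozaev identities. This rests on concentration-compactness or symmetric rearrangement in inhomogeneous mixed spaces such as $\dot H^1 \cap \dot H^{s_c}$, which is delicate because the ambient spaces are scale-invariant. Once these inequalities and the profile decomposition of \cite{gq} are available, the remaining extraction of $V_1$ and $V_2$ is routine bookkeeping.
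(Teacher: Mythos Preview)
The paper does not give its own proof of this lemma; it simply records the statement and cites Guo \cite{gq} (``which comes from \cite{gq}''). Your outline---sharp Gagliardo--Nirenberg inequalities whose optimizers are the ground states of \eqref{e1} and \eqref{e2}, followed by a G\'erard-type profile decomposition in $\dot H^{s_c}\cap\dot H^1$ (respectively $L^{p_c}\cap\dot H^1$), and extraction of the dominant profile---is exactly the route taken in \cite{gq}, so there is nothing to compare: you have reproduced the intended argument. Your honest remark that the delicate point is the existence of optimizers for the scale-invariant Weinstein functionals in these mixed homogeneous spaces is accurate; this is indeed where the real work in \cite{gq} lies, and once that and the profile decomposition are granted, the extraction of $V_1$ and $V_2$ is the routine bookkeeping you describe.
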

In the following theorem, we will use the ground states of equations \eqref{e1} and \eqref{e2} to describe some concentration properties of blow-up solutions to \eqref{e}.
This result is closed related to the result obtained for the classical nonlinear Schr\"{o}dinger equation \eqref{0} with $\frac{4}{N}<p<\frac{4}{N-2}$ by Guo \cite{gq}.
\begin{theorem}
Let $\lambda_1<0$, $\lambda_2\in \mathbb{R}$, $\frac{4}{N}<p_1<\frac{4}{N-2}$, $0<p_2< p_1$ and $u_0 \in \dot{H}^{s_c}\cap \dot{H}^1$. If the solution $u$ of \eqref{e} blows up in finite time $T^*>0$ and satisfies
\begin{equation}\label{a}
\sup_{t\in [0,T^*)}\|u(t)\|_{\dot{H}^{s_c}}< \infty.
\end{equation}
Assume that $\lambda(t)>0$ such that
\begin{equation}\label{b}
\lambda(t)\|\nabla u(t)\|_{L^2}^{\frac{1}{s_c}}\rightarrow \infty,
\end{equation}
as $t\rightarrow T^*$. Then, there exist $x_1(t),x_2(t)\in \mathbb{R}^N$ such that
\begin{equation}\label{c}
\liminf_{t\rightarrow T^*}\int_{|x-x_1(t)|\leq \lambda(t)}|(-\triangle)^{\frac{s_c}{2}}u(t,x)|^2dx\geq \|Q\|_{\dot{H}^{s_c}}^2,
\end{equation}
and
\begin{equation}\label{d}
\liminf_{t\rightarrow T^*}\int_{|x-x_2(t)|\leq \lambda(t)}|u(t,x)|^{p_c}dx\geq \|R\|_{L^{p_c}}^{p_c},
\end{equation}
where $Q$ and $R$ solve the elliptic equations (\ref{e1}) and \eqref{e2}, respectively.
\end{theorem}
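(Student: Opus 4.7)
The proof parallels the structure of Theorem~4.2, with the $L^2$-critical compactness Lemma~4.1 replaced by its supercritical counterpart Lemma~5.1. Fix an arbitrary sequence $t_n\to T^*$, choose the scaling $\rho_n=\|\nabla u(t_n)\|_{L^2}^{-1/(1-s_c)}$, and set $v_n(x):=\rho_n^{2/p_1}u(t_n,\rho_n x)$, so that $\|\nabla v_n\|_{L^2}=1$. Because the map $u\mapsto\rho^{2/p_1}u(\rho\cdot)$ leaves $\|\cdot\|_{\dot{H}^{s_c}}$ invariant, hypothesis \eqref{a} forces $\|v_n\|_{\dot{H}^{s_c}}=\|u(t_n)\|_{\dot{H}^{s_c}}\le C$, so $\{v_n\}$ is bounded in $\dot{H}^{s_c}\cap\dot{H}^1$. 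Hypothesis \eqref{b} is exactly what is required to guarantee $\lambda(t_n)/\rho_n\to\infty$, which will let me transport the concentration obtained from Lemma~5.1 at unit scale back to physical scale $\lambda(t_n)$.

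To invoke Lemma~5.1 I still need $\liminf_n\|v_n\|_{L^{p_1+2}}\ge m>0$. Using $\lambda_1<0$, conservation of energy reads
\[\tfrac{|\lambda_1|}{p_1+2}\|u(t_n)\|_{L^{p_1+2}}^{p_1+2}=\tfrac12\|\nabla u(t_n)\|_{L^2}^2+\tfrac{\lambda_2}{p_2+2}\|u(t_n)\|_{L^{p_2+2}}^{p_2+2}-E(u_0).\]
Dividing by $\|\nabla u(t_n)\|_{L^2}^2$ and using the scaling identity $\|v_n\|_{L^{p_1+2}}^{p_1+2}=\|u(t_n)\|_{L^{p_1+2}}^{p_1+2}/\|\nabla u(t_n)\|_{L^2}^2$ I conclude $\tfrac{|\lambda_1|}{p_1+2}\|v_n\|_{L^{p_1+2}}^{p_1+2}\to\tfrac12$. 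Lemma~5.1 then yields sequences $\{x_n^1\},\{x_n^2\}\subset\mathbb{R}^N$ and nontrivial limits $V_1,V_2$ with $v_n(\cdot+x_n^1)\rightharpoonup V_1$ in $\dot{H}^{s_c}\cap\dot{H}^1$ and $v_n(\cdot+x_n^2)\rightharpoonup V_2$ in $L^{p_c}\cap\dot{H}^1$, together with the advertised lower bounds. Set $x_j(t_n):=\rho_n x_n^j$. The scaling identity $(-\Delta)^{s_c/2}v_n(x)=\rho_n^{N/2}[(-\Delta)^{s_c/2}u(t_n,\cdot)](\rho_n x)$ and a change of variables, combined with $r\rho_n\le\lambda(t_n)$ for $n$ large (for any fixed $r>0$), give
\[\int_{|x-x_1(t_n)|\le\lambda(t_n)}|(-\Delta)^{s_c/2}u(t_n,x)|^2\,dx\ge\int_{|y|\le r}|(-\Delta)^{s_c/2}v_n(y+x_n^1)|^2\,dy.\]
Taking $\liminf_n$ by weak $L^2$-lower semicontinuity and then $r\to\infty$ produces \eqref{c}. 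The inequality \eqref{d} is proved identically, using weak $L^{p_c}$-lower semicontinuity of $v_n(\cdot+x_n^2)\rightharpoonup V_2$ and the $L^{p_c}$-invariance of the scaling. Since the sequence $\{t_n\}$ was arbitrary, the supremum-over-centres and continuity-and-decay argument used at the end of Theorem~4.2 delivers the measurable functions $x_1(t),x_2(t)$.

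The main technical obstacle is the decay $\|u(t_n)\|_{L^{p_2+2}}^{p_2+2}=o(\|\nabla u(t_n)\|_{L^2}^2)$ used above to drive the energy identity to $\tfrac12$. In the supercritical regime $p_2$ may exceed the $L^2$-critical exponent $4/N$, so this is not automatic from mass conservation alone; the uniform bound \eqref{a} is essential. Interpolating between $\dot{H}^{s_c}$ and $\dot{H}^1$ yields $\|u\|_{L^{p_2+2}}\le C\|u\|_{\dot{H}^{s_c}}^{1-\theta}\|\nabla u\|_{L^2}^{\theta}$, and a direct computation shows that the structural assumption $p_2<p_1$ is equivalent to $(p_2+2)\theta<2$, which is exactly what produces the required decay once $\|u(t_n)\|_{\dot{H}^{s_c}}$ stays bounded.
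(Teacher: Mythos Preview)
Your overall strategy coincides with the paper's: rescale so that $\|\nabla v_n\|_{L^2}$ is a fixed constant, use energy conservation to show that the quantity $H(v_n)=\tfrac12\|\nabla v_n\|_{L^2}^2+\tfrac{\lambda_1}{p_1+2}\|v_n\|_{L^{p_1+2}}^{p_1+2}$ tends to zero, apply Lemma~5.1, and then transport the concentration back via $\lambda(t_n)/\rho_n\to\infty$. The difference lies in how you establish the key decay $\|u(t_n)\|_{L^{p_2+2}}^{p_2+2}=o(\|\nabla u(t_n)\|_{L^2}^2)$, and this is where your argument has a genuine gap.

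Your claimed inequality $\|u\|_{L^{p_2+2}}\le C\|u\|_{\dot H^{s_c}}^{1-\theta}\|\nabla u\|_{L^2}^{\theta}$ with $\theta\in(0,1)$ requires the Sobolev index $s$ determined by $p_2+2=\tfrac{2N}{N-2s}$ to lie in the interval $[s_c,1]$; equivalently, it needs $p_2+2\ge p_c=\tfrac{Np_1}{2}$. When $p_2+2<p_c$ (which occurs for a wide range of admissible $p_2$, e.g.\ $N=3$, $p_1=3$, $p_2\in(0,\tfrac52)$), the space $\dot H^{s_c}\cap\dot H^1$ simply does not embed into $L^{p_2+2}$, so no interpolation of the form you wrote can hold. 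Your equivalence ``$p_2<p_1\Longleftrightarrow(p_2+2)\theta<2$'' is correct \emph{when} the interpolation is valid, but that premise fails on part of the parameter range.

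The paper resolves this by splitting into cases and bringing in the conserved $L^2$ norm. For $p_2+2<p_c$ one interpolates instead between $L^2$ and $L^{p_c}$ (or between $L^2$ and $\dot H^1$ when $p_2<4/N$), obtaining
\[
\|u\|_{L^{p_2+2}}^{p_2+2}\le C\|u\|_{L^2}^{(1-\theta_2)(p_2+2)}\|u\|_{\dot H^{s_c}}^{\theta_2(p_2+2)}
\quad\text{or}\quad
\|u\|_{L^{p_2+2}}^{p_2+2}\le C\|u\|_{L^2}^{p_2+2-\frac{Np_2}{2}}\|\nabla u\|_{L^2}^{\frac{Np_2}{2}},
\]
and then uses mass conservation together with \eqref{a} (respectively $\tfrac{Np_2}{2}<2$) to conclude. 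So a single interpolation does not suffice; you need the case analysis, and in particular the $L^2$ control, to cover the full range $0<p_2<p_1$.
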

\begin{proof}
 Set
\[
\rho(t)=\|\nabla Q\|_{L^2}^{\frac{1}{1-s_c}}/\|\nabla u(t)\|_{L^2}^{\frac{1}{1-s_c}}~~and~~v(t,x)=\rho^{\frac{2}{p_1}}(t)u(t,\rho(t) x).
\]
Let $\{t_n\}_{n=1}^\infty$ be an any time sequence such that $t_n\rightarrow T^*$, $\rho_n=\rho(t_n)$ and $v_n(x)=v(t_n,x)$.
Then, it follows from assumption \eqref{a} that $v_n$ satisfies $\|v_n\|_{\dot{H}^{s_c}}=\|u(t_n)\|_{\dot{H}^{s_c}}< \infty$ uniformly in $n$.
Moreover, by some direct computations, we obtain
\begin{equation*}
\|\nabla v_n\|_{L^2}=\rho_n^{1-s_c}\|\nabla u(t_n)\|_{L^2}=\|\nabla Q\|_{L^2},
\end{equation*}
and
\begin{align}\label{51}
H(v_n):= &\frac{1}{2}\int_{\mathbb{R}^N} |\nabla v_n(x)|^2dx+\frac{\lambda_1}{p_1+2}\int_{\mathbb{R}^N} |v_n(x)|^{p_1+2}dx
\nonumber\\
  =&\rho_n^{2(1-s_c)}\left(\frac{1}{2}\int_{\mathbb{R}^N} |\nabla u(t_n,x)|^2dx+\frac{\lambda_1}{p_1+2}\int_{\mathbb{R}^N} |u(t_n,x)|^{p_1+2}dx\right)\nonumber\\
  =&\rho_n^{2(1-s_c)}\left(E(u(t_n))-\frac{\lambda_2}{p_2+2}\int_{\mathbb{R}^N} |u(t_n,x)|^{p_2+2}dx\right)\nonumber\\
 = & \frac{\|\nabla Q\|_{L^2}^2}{\|\nabla u(t_n)\|_{L^2}^2}\left(E(u_0)-\frac{\lambda_2}{p_2+2}\int_{\mathbb{R}^N} |u(t_n,x)|^{p_2+2}dx\right).
\end{align}
In the following, we will prove $H(v_n)\rightarrow 0$ as $n\rightarrow\infty$, which implies
$\int_{\mathbb{R}^N} |v_n(x)|^{p+2}dx\rightarrow 2\|\nabla Q\|_{L^2}^2$.

When $0 < p_2 <\frac{4}{N}$, note that
the following Gagliardo-Nirenberg inequality
\begin{equation}\label{gn1}
\int_{\mathbb{R}^N} |u(x)|^{p_2+2}dx\leq C\|u\|_{L^2}^{\frac{2(p_2+2)-Np_2}{2}}\|\nabla u\|_{L^2}^{\frac{Np_2}{2}}.
\end{equation}
By the similar argument as \eqref{xx45}, it follows
 that $H(v_n)\rightarrow 0$ as $n\rightarrow\infty$.

When $\frac{4}{N}\leq p_2 <p_1<\frac{4}{N-2}$ and $\frac{Np_1}{2}<p_2+2$, we can obtain the following Gagliardo-Nirenberg inequality
\begin{equation}\label{gn2}
\int_{\mathbb{R}^N} |u(x)|^{p_2+2}dx\leq C\|u\|_{L^{\frac{Np_1}{2}}}^{(p_2+2)(1-\theta_1)}\| \nabla u\|_{L^{2}}^{\theta_1(p_2+2)},
\end{equation}
where $\theta_1=\frac{4p_2+8-2Np_1}{(p_2+2)(2p_1+4-Np_1)}$.
Thus, we deduce from $\theta_1(p_2+2)<2$ that $H(v_n)\rightarrow 0$ as $n\rightarrow\infty$.

When $\frac{4}{N}\leq p_2 <p_1<\frac{4}{N-2}$ and
$\frac{Np_1}{2}>p_2+2$, we have
\begin{equation}\label{gn3}
\int_{\mathbb{R}^N} |u(x)|^{p_2+2}dx\leq C\| u\|_{L^{2}}^{(1-\theta_2)(p_2+2)}\|u\|_{L^{\frac{Np_1}{2}}}^{\theta_2(p_2+2)}\leq C\| u\|_{L^{2}}^{(1-\theta_2)(p_2+2)}\|u\|_{H^{s_c}}^{\theta_2(p_2+2)},
\end{equation}
where $\theta_2=\frac{Np_1p_2}{(p_2+2)(Np_1-4)}$.
This inequality and the assumption \eqref{a} imply that $H(v_n)\rightarrow 0$ as $n\rightarrow\infty$.

When $\frac{4}{N}\leq p_2<p_1<\frac{4}{N-2}$ and $\frac{Np_1}{2}=p_2+2$, it follows from the Sobolev embedding that
\begin{equation}\label{gn4}
\int_{\mathbb{R}^N} |u(x)|^{p_2+2}dx=\|u\|_{L^{\frac{Np_1}{2}}}^{p_2+2}\leq C\|u\|_{H^{s_c}}^{p_2+2},
\end{equation}
which, together with the assumption \eqref{a}, implies that $H(v_n)\rightarrow 0$ as $n\rightarrow\infty$.

Set $m=2\|\nabla Q\|_{L^2}^2$ and $M=\|\nabla Q\|_{L^2}^2$. Then it follows from Lemma 4.1 that there exist $V\in \dot{H}^{s_c}\cap \dot{H}^1$ and $\{x_n\}_{n=1}^\infty \subset \mathbb{R}^N$ such that, up to a subsequence,
\[
v_n(\cdot +x_n)=\rho_nu(t_n,\rho_n\cdot + x_n)\rightharpoonup V~~weakly~in~\dot{H}^{s_c}\cap \dot{H}^1
\]
with
\begin{equation}\label{14.1}
\|V\|^2_{\dot{H}^{s_c}}\geq \frac{m}{2M}\|Q\|_{\dot{H}^{s_c}}^2.
\end{equation}
By the definition of $\dot{H}^{s_c}$, we have
\[
(-\Delta)^{\frac{s_c}{2}}\rho_nu(t_n,\rho_n\cdot + x_n)\rightharpoonup (-\Delta)^{\frac{s_c}{2}}V~~weakly~in~L^2.
\]
Thus, for any $R>0$,
\[
\int_{|x|\leq R}|(-\Delta)^{\frac{s_c}{2}}V(x)|^2dx\leq \liminf_{n\rightarrow \infty} \int_{|x-x_n|\leq \rho_nR}|(-\Delta)^{\frac{s_c}{2}}u(t_n,x)|^2dx.
\]
In view of the assumption $\lambda(t_n)/\rho_n\rightarrow \infty$, this implies immediately
\begin{equation*}
\int_{|x|\leq R}|(-\Delta)^{\frac{s_c}{2}}V|^2dx\leq \liminf_{n\rightarrow \infty} \sup_{y\in \mathbb{R}^N}\int_{|x-y|\leq\lambda (t_n)}|(-\Delta)^{\frac{s_c}{2}}u(t_n,x)|^2dx.
\end{equation*}
Since the sequence $\{t_n\}_{n=1}^\infty$ is arbitrary, we obtain
\begin{equation*}
\int_{|x|\leq R}|(-\Delta)^{\frac{s_c}{2}}V|^2dx\leq \liminf_{n\rightarrow \infty} \sup_{y\in \mathbb{R}^N}\int_{|x-y|\leq \lambda (t)}|(-\Delta)^{\frac{s_c}{2}}u(t,x)|^2dx.
\end{equation*}
Observe that for every $t\in [0,T)$, the function $y\mapsto \int_{|x-y|\leq \lambda (t)}|(-\Delta)^{\frac{s_c}{2}}u(t,x)|^2dx$ is continuous and goes to zero at infinity. Thus, there exists $x(t)\in \mathbb{R}^N$ such that
\begin{equation*}
\int_{|x-x(t)|\leq \lambda (t)}|(-\Delta)^{\frac{s_c}{2}}u(t,x)|^2dx= \sup_{y\in \mathbb{R}^N}\int_{|x-y|\leq \lambda (t)}|(-\Delta)^{\frac{s_c}{2}}u(t,x)|^2dx.
\end{equation*}
This and \eqref{14.1} yield \eqref{c}.
The proof of \eqref{d} is similar, so we omit it. This completes the proof.
\end{proof}

\end{document}